\let\pa\partial
\let\na\nabla
\let\eps\varepsilon
\newcommand{\N}{{\mathbb N}}
\newcommand{\R}{{\mathbb R}}
\newcommand{\diver}{\operatorname{div}}
\newcommand{\dom}{\mathcal{D}}
\newtheorem{theorem}{Theorem}
\newtheorem{lemma}[theorem]{Lemma}
\newtheorem{remark}[theorem]{Remark}
\begin{document}

\title[Poisson--Nernst--Planck--Fermi system for charge transport]{
Analysis of a Poisson--Nernst--Planck--Fermi system \\
for charge transport in ion channels}

\author[A. J\"ungel]{Ansgar J\"ungel}
\address{Institute of Analysis and Scientific Computing, Technische Universit\"at Wien,
Wiedner Hauptstra\ss e 8--10, 1040 Wien, Austria}
\email{juengel@tuwien.ac.at}

\author[A. Massimini]{Annamaria Massimini}
\address{Institute of Analysis and Scientific Computing, Technische Universit\"at Wien,
Wiedner Hauptstra\ss e 8--10, 1040 Wien, Austria}
\email{annamaria.massimini@tuwien.ac.at}

\date{\today}

\thanks{The authors acknowledge partial support from
the Austrian Science Fund (FWF), grants P33010 and F65.
This work has received funding from the European
Research Council (ERC) under the European Union's Horizon 2020 research and innovation programme, ERC Advanced Grant no.~101018153.}

\begin{abstract}
A modified Poisson--Nernst--Planck system in a bounded domain with mixed Dirichlet--Neumann boundary conditions is analyzed. It describes the concentrations of ions immersed in a polar solvent and the correlated electric potential due to the ion--solvent interaction.
The concentrations solve cross-diffusion equations, which are thermodynamically consistent. The considered mixture is saturated, meaning that the sum of the ion and solvent concentrations is constant. The correlated electric potential depends nonlocally on the electric potential and solves the fourth-order Poisson--Fermi equation. The existence of global bounded weak solutions is proved by using the boundedness-by-entropy method. The novelty of the paper is the proof of the weak--strong uniqueness property. In contrast to the existence proof, we include the solvent concentration in the cross-diffusion system, leading to a diffusion matrix with nontrivial kernel. Then the proof is based on the relative entropy method for the extended cross-diffusion system and the positive definiteness of a related diffusion matrix on a subspace.
\end{abstract}

\keywords{Ion transport, Poisson--Nernst--Planck equations, Poisson--Fermi equation, cross-diffusion systems, volume filling, existence of weak solutions, weak--strong uniqueness.}

\subjclass[2000]{35J40, 35K51, 35Q92, 92C37.}

\maketitle


\section{Introduction}

The modeling of the transport of ions through biological channels is of fundamental importance in cell biology. 
Several strategies have been developed in past decades, using
molecular or Brownian dynamics or the
Poisson--Nernst--Planck theory. This theory relies on the assumptions
that the dynamics of ion transport is based on diffusion and electrostatic interaction only and that the solution is dilute. However, the presence of narrow channel pores requires a more sophisticated modeling. In particular, the ion size is not small compared to the biological channel diameter, and many-particle interactions due to the confined geometry need to be taken into account. In this paper, we analyze a modified Poisson--Nernst--Planck system modeling ion--water interactions and finite ion size constraints. We prove the existence of global weak solutions and, as the main novelty, the weak--strong uniqueness property using entropy methods.

\subsection{The model setting}

The evolution of $n$ ionic species, immersed in a solvent (like water), is assumed to be given by the equations 
\begin{align}
  & \pa_t u_i + \diver J_i = r_i(u), \quad 
  J_i = -D_i(\na u_i - u_i\na\log u_0 
  + u_iz_i\na\Phi), \label{1.mass} \\
  & \lambda^2(\ell^2\Delta - 1)\Delta\Phi = \sum_{j=1}^n z_ju_j + f(x)\quad\mbox{in }\Omega,\ t>0,\ i=1,\ldots,n, \label{1.pf}
\end{align}
where $\Omega\subset\R^d$ ($d\ge 1$) is a bounded Lipschitz domain, 
$u=(u_1,\ldots,u_n)$ is the concentration vector, 
supplemented with initial and mixed Dirichlet--Neumann boundary conditions,
\begin{align}
  & u_i(\cdot,0) = u_i^0 \quad\mbox{in }\Omega, \quad i=1,\ldots,n, \label{1.ic} \\
	& J_i\cdot\nu = 0\mbox{ on }\Gamma_N, \quad u_i=u_i^D\mbox{ on }\Gamma_D,\ t>0,
	\label{1.bcu} \\
	& \na\Phi\cdot\nu = \na\Delta\Phi\cdot\nu = 0\mbox{ on }\Gamma_N, \quad
	\Phi = \Phi^D, \ \Delta\Phi = 0 \mbox{ on }\Gamma_D,\ t>0, \label{1.bcphi}
\end{align}
where $\pa\Omega=\Gamma_D\cup\Gamma_N$, $\Gamma_D\cap\Gamma_N=\emptyset$, and
$\nu$ is the exterior unit normal vector to $\pa\Omega$.

The unknowns are the ion concentrations (or volume fractions) $u_i(x,t)$ of the $i$th ion species and the correlated electric potential $\Phi(x,t)$. The solvent concentration (or volume fraction) $u_0(x,t)$ is given by $u_0=1-\sum_{i=1}^n u_i$, which means that the 
mixture is saturated. Equations \eqref{1.mass} are cross-diffusion equations with the fluxes $J_i$ and the reaction rates $r_i(u)$. The parameters are the diffusivities $D_i>0$ and the valences $z_i\in\mathbb{Z}$. 
Equation \eqref{1.pf} is the Poisson--Fermi equation with the scaled Debye length $\lambda>0$, the correlation length $\ell>0$, and the given background charge density $f(x)$. We assume that the domain is isolated on the Neumann boundary, while the concentrations and the electric potential are prescribed on the Dirichlet boundary. 

In the following, we discuss definition \eqref{1.mass} of the fluxes and equation \eqref{1.pf} for the correlated electric potential.

We recover the classical Poisson--Nernst--Planck equations if $u_0=\mbox{const.}$ and $\ell=0$. In this situation, we can write $J_i^{\rm id}=-D_iu_i\na\mu_i^{\rm id}$ with the electrochemical potential $\mu_i^{\rm id}=\log u_i+z_i\Phi$ of an ideal dilute solution. In concentrated solutions, the finite size of the ions needs to be taken into account, expressed by the excess chemical potential $\mu_i^{\rm ex}$, so that the electrochemical potential becomes $\mu_i=\mu_i^{\rm id}+\mu_i^{\rm ex}$. Bikerman \cite{Bik42} suggested the choice $\mu_i^{\rm ex} = -\log(1-\sum_{i=1}^n u_i) = -\log u_0$; also see \cite[Sec.~3.1.2]{BMSA09}. Then $J_i=-D_iu_i\na\mu_i$ coincides with the flux adopted in our model \eqref{1.mass}. Note that solving $\mu_i= \log(u_i/u_0)+z_i\Phi$ for the concentrations, we find that the ion profiles obey the Fermi--Dirac statistics
$$
  u_i = \frac{\exp(\mu_i-z_i\Phi)}{1+\sum_{j=1}^n\exp(\mu_j-z_j\Phi)},
  \quad i=1,\ldots,n.
$$
Then, given $\mu_i$ and $\Phi$, the bounds $0\le u_i\le 1$ are automatically satisfied. Other choices of the excess chemical potential were suggested in \cite[Sec.~2.1]{BiSo07}. 

In the literature, there exist also other approaches to define the fluxes $J_i$ under finite size constraints. The diffusion limit of an on-lattice model, which takes into account that neighboring sites may be occupied (modeling size exclusion), was performed in \cite{BSW12}, analyzed in \cite{GeJu18}, and numerically solved in \cite{CHM23}, resulting to
\begin{equation}\label{1.gersten}
  J^{(1)}_i=-D_i(u_0\na u_i - u_i\na u_0 + u_0u_iz_i\na\Phi), 
  \quad i=1,\ldots,n.
\end{equation}
This model avoids the singular term $\na\log u_0$, which is delicate near $u_0=0$, but it introduces the diffusion term $u_0\na u_i$, which degenerates at $u_0=0$. Another flux definition was suggested in \cite{GoKo18},
$$
  J^{(2)}_i = -D_i\bigg(\na u_i
  + u_iz_i\na\Phi - \sum_{j=1}^n z_ju_j\na\Phi\bigg),
  \quad i=1,\ldots,n.
$$
The additional term $-\sum_{j=1}^nz_ju_j\na\Phi$ comes from the force balance in the Euler momentum equation for zero fluid velocity. 
The ion--water interaction is described in \cite{Che16} by 
$$
  J^{(3)}_i = -D_i\bigg(\na u_i + u_iz_i\na\Phi
  - \frac{\pa\eps_0}{\pa u_i}|\na\Phi|^2\bigg), \quad i=1,\ldots,n,
$$
where the dielectricity $\eps_0=\lambda^2$, instead of being constant, depends on $u$. This assumption is based on the experimental observation that the dielectric response of water decreases as ion concentrations increase \cite{Che16}. Thus, $\pa\eps_0/\pa u_i<0$, showing that the ion--water interaction energy is always nonnegative. Finite ion size effects are modeled in \cite{LiEi14} by including an approximation of the Lennard--Jones potential in the energy functional, leading to 
$$
  J_i^{(4)} = -D_i\bigg(\na u_i + u_i\na\sum_{j=1}^n a_{ij}u_i
  + z_i\na\Phi\bigg), \quad i=1,\ldots,n.
$$
Assuming that $(a_{ij})$ is positive definite, the global existence of weak solutions for two species was proved in \cite{Hsi19}. For the analysis of the stationary equations, see \cite{Gav18}.  
Finally, excluded volume effects can be included by considering nonlinear diffusivities $D_i(u_i)=1+\alpha u_i$, where $\alpha>0$ is a measure of the volume exclusion interactions \cite{BrCh12}.

Our model has the advantage of being consistent with the thermodynamical model \cite{DGM13}
$$
  J_i = -\sum_{j=1}^n D_{ij}u_j\na(\mu_j-\mu_0), \quad\mbox{where }
  \mu_i = \log u_i + z_i\Phi, \quad \mu_0 = \log u_0 + z_0\Phi,
$$
assuming that the diffusion matrix is diagonal, $D_{ij}=D_i\delta_{ij}$, and that the solvent is neutral, $z_0=0$. 

The interaction of the ions with polar solvents like water is modeled by the potential in \eqref{1.pf}. Indeed, let $\phi$ be the electric potential of free ions, given by $-\lambda^2\Delta\phi = \rho$, where $\rho$ is the total charge density. Then the correlated potential $\Phi=\ell^{-2}Y_\ell*\phi$ is the convolution between the Yukawa potential $Y_\ell(x)=(|x|/\ell)^{-1}\exp(-|x|/\ell)$ \cite{LiEi20} and the electric potential, where $\ell>0$ is the correlation length of the screening by ions and water \cite{XLE16}. As this potential satisfies $-\ell^2\Delta\Phi+\Phi=\phi$, we recover \eqref{1.pf} with $\rho=\sum_{j=1}^nz_ju_j+f(x)$. Thus, the Poisson--Fermi equation \eqref{1.pf} includes finite ion size effects and polarization
correlations among water molecules. It generalizes the fourth-order differential permittivity operator of \cite{San06} and the nonlocal permittivity in ionic liquids of \cite{BSK11}.
If there are no correlation and polarization effects ($\ell=0$), we recover the standard Poisson equation for the electric potential. The expression $\eps_0=\lambda^2(\ell^2\Delta-1)$ can be interpreted as a dielectric differential operator. 

\subsection{Entropy structure}

System \eqref{1.mass} can be written as a cross-diffusion system with a diffusion matrix which is neither symmetric nor positive definite. This issue is overcome by exploiting the entropy (or free energy) structure and using the boundedness-by-entropy method \cite{Jue15}. The free energy associated to \eqref{1.mass}--\eqref{1.pf} is given by 
\cite{BSK11} 
\begin{align}
  & H(u) = \int_\Omega h(u)dx, \quad\mbox{where} \label{1.H} \\
	& h(u) = \sum_{i=0}^n\int_{u_i^D}^{u_i}\log\frac{s}{u_i^D}ds
	+ \frac{\lambda^2}{2}|\na(\Phi-\Phi^D)|^2 + \frac{(\lambda\ell)^2}{2}|\Delta(\Phi-\Phi^D)|^2.
	\nonumber
\end{align}
The energy density $h(u)$ consists of the internal, free-ion 
electric, and correlation electric energies. 
The free energy allows us to formulate equations \eqref{1.mass} as a diffusion system with a positive semidefinite diffusion matrix. Indeed, we introduce the electrochemical potentials
$$
  \widetilde{\mu}_i = \frac{\pa h}{\pa u_i} 
  = \log\frac{u_i}{u_0} - \log\frac{u_i^D}{u_0^D} + z_i(\Phi-\Phi^D),
  \quad i=1,\ldots,n,
$$
where $\pa h/\pa u_i$ denotes the variational derivative of $h$ with respect to $u_i$ (see \cite[Lemma 7]{GeJu18}) and
$u_0^D=1-\sum_{i=1}^n u_i^D$. As in \cite{GeJu18}, we split the electrochemical potentials into the entropy variables $w_i$ and the boundary contributions $w_i^D$ by
\begin{equation}\label{1.wi}
  w_i := \log\frac{u_i}{u_0} + z_i\Phi, \quad w_i^D := \log\frac{u_i^D}{u_0^D} + z_i\Phi^D.
\end{equation}
Then equations \eqref{1.mass} can be written as 
\begin{equation}\label{1.B}
  \pa_t u_i - \diver\sum_{j=1}^n B_{ij}(w,\Phi)\na w_j = r_i(u), \quad i=1,\ldots,n,
\end{equation}
where $B_{ij}=D_iu_i\delta_{ij}$ and $u_i=u_i(w,\Phi)$ is interpreted as a function of $w=(w_1,\ldots,w_n)$ and $\Phi$ according to
\begin{equation}\label{1.ui}
  u_i(w,\Phi) = \frac{\exp(w_i-z_i\Phi)}{1+\sum_{j=1}^n\exp(w_i-z_j\Phi)}.
\end{equation}
The advantage of formulation \eqref{1.B} is that the new diffusion matrix $B=(B_{ij})$ is symmetric and positive semidefinite. Observe that system \eqref{1.B} is of degenerate type since $u_i=0$ is possible, and $\det B=0$ in this case.
The formulation in terms of entropy variables has the further advantage that the ion concentrations $u_i$, defined by \eqref{1.ui}, are nonnegative and satisfy $\sum_{i=1}^n u_i\le 1$, thus fulfilling the saturation assumption. 

\subsection{Main results}

We introduce the simplex 
$\dom=\{u=(u_1,\ldots,u_n)\in(0,1)^n:\sum_{i=1}^n u_i<1\}$
and set $\Omega_T=\Omega\times(0,T)$.
The following hypotheses are imposed:
\begin{itemize}
\item[(H1)] Domain: $\Omega\subset\R^d$ ($1\le d\le 3$) is a bounded Lipschitz domain with $\pa\Omega=\Gamma_D\cup\Gamma_N$, $\Gamma_D\cap\Gamma_N=\emptyset$, $\Gamma_N$ is
open in $\pa\Omega$, and $\operatorname{meas}(\Gamma_D)>0$.
\item[(H2)] Data: $T>0$, $D_i>0$, $z_i\in\R$ for $i=1,\ldots,n$, $f\in L^2(\Omega)$.
\item[(H3)] Initial data: $u^0=(u_1^0,\ldots,u_n^0)\in L^1(\Omega;\R^n)$ satisfies $u^0(x)\in\overline\dom$ for a.e.\ $x\in\Omega$.
\item[(H4)] Boundary data: $u^D=(u_1^D,\ldots,u_n^D)\in H^1(\Omega;\R^n)$ satisfies
$u^D(x)\in\dom$ for $x\in\Omega$, 
$\log u_0^D\in L^2(\Omega)$,
and $\Phi^D\in H^2(\Omega)$ solves
\begin{align}\label{1.PhiD}
  & \lambda^2(\ell^2\Delta-1)\Delta\Phi^D = f(x)\mbox{ in }\Omega, \\
	& \na\Phi^D\cdot\nu=\na\Delta\Phi^D\cdot\nu=0\mbox{ on }\Gamma_N, \quad
	\Delta\Phi^D=0\mbox{ on }\Gamma_D. \nonumber
\end{align}
\item[(H5)] Reaction rates: $r_i\in C^0([0,\infty)^n;\R)$
for $i=1,\ldots,n$, and there exists $C_r>0$ such that
for all $u\in L^\infty(\Omega_T;\dom)$ and $\Phi$, given by \eqref{1.pf}
and \eqref{1.bcphi},
\begin{equation}\label{1.H5}
  \int_\Omega\sum_{i=1}^n r_i(u)\frac{\pa h}{\pa u_i} dx
  \le C_r(1+H(u)).
\end{equation}
\end{itemize}

The restriction to three space dimensions in Hypothesis (H1) is not needed. It can be removed by regularizing the Poisson--Fermi equation \eqref{1.pf} to ensure that $\Phi\in L^\infty(\Omega)$; see Remark \ref{rem.Phi} below.
In Hypothesis (H4), it is sufficient to define the boundary data on $\Gamma_D$. We have extended them to $\Omega$ with the special extention of $\Phi^D$, fulfilling the fourth-order elliptic problem \eqref{1.PhiD}. This extension is needed to be consistent with the definition of the free energy and the entropy variables; see \cite[Lemma 7]{GeJu18}. The bound in Hypothesis (H5) is needed to derive gradient bounds on the concentrations from the free energy inequality; see \eqref{1.aux} below. Since $\pa h/\pa u_i$ contains the logarithm, $r_i(u)$ needs to cancel the singularity in $\pa h/\pa u_i$ at $u_i=0$. The contribution of the potential $\Phi-\Phi^D$ in  $\pa h/\pa u_i$ is treated by applying the Poincar\'e inequality and observing that $\int_\Omega|\na(\Phi-\Phi^D)|^2dx\le CH(u)$ for some $C>0$. Therefore, we need the integrated version \eqref{1.H5} instead of the pointwise inequality assumed in \cite[Sec.~1.4]{Jue15}.

We introduce the test spaces
\begin{align*}
  H^1_{D}(\Omega) &= \{v\in H^1(\Omega):v=0\mbox{ on }\Gamma_D\}, \\
  H^2_{D,N}(\Omega) &= \{v\in H^2(\Omega):v=0\mbox{ on }\Gamma_D,\ 
  \na v\cdot\nu=0\mbox{ on }\Gamma_N\}.
\end{align*}
Our first main result is as follows.

\begin{theorem}[Global existence of solutions]\label{thm.ex}
Let Hypotheses (H1)--(H5) hold. Then there exists a bounded weak solution
$u_1,\ldots,u_n$ to \eqref{1.mass}--\eqref{1.bcphi} satisfying
$u_i(x,t)\in\overline{\dom}$ for a.e.\ $(x,t)\in\Omega_T$, $i=1,\ldots,n$,
\begin{align*}
  & \sqrt{u_i}\in L^2(0,T;H^1(\Omega)), \quad u_i\in H^1(0,T;H^1_D(\Omega)')\cap C^0([0,T];L^2(\Omega)), \\
  & \Phi\in L^2(0,T;H^2(\Omega)), \quad\log u_0\in L^2(0,T;H^1(\Omega)),
\end{align*}
the weak formulation
\begin{align}
  \int_0^T\langle\pa_t u_i,\phi_i\rangle dt - \int_0^T\int_\Omega J_i\cdot\na\phi_i dxdt
	&= \int_0^T\int_\Omega r_i(u)\phi_i dxdt, \label{1.weak1} \\
	\lambda^2\int_0^T\int_\Omega(\ell^2\Delta\Phi\Delta\theta + \na\Phi\cdot\na\theta)dxdt
	&= \int_0^T\int_\Omega\bigg(\sum_{i=1}^nz_iu_i + f\bigg)\theta dxdt
    \label{1.weak2}
\end{align}
for all $\phi_i\in L^2(0,T;H^1_D(\Omega))$ and $\theta\in L^2(0,T;H^2_{D,N}(\Omega))$, where $J_i$ is given by \eqref{1.mass} and $\langle\cdot,\cdot\rangle$ is the dual product between $H_D^1(\Omega)'$ and $H_D^1(\Omega)$.
The initial conditions \eqref{1.ic} are satisfied a.e.\ in $\Omega$, and the Dirichlet boundary conditions are fulfilled in the sense of traces in $L^2(\Gamma_D)$. Furthermore, if $r_i(u)=0$ for all $i=1,\ldots,n$ and the Dirichlet boundary conditions are in thermal equilibrium (e.g.\ $w_i^D:=\log(u_i^D/u_0^D)+z_i\Phi^D=\mbox{const.}$ in $\Omega$), 
the solution satisfies for $0<s<t<T$ the free energy inequality
\begin{equation}\label{1.fei}
  H(u(t)) + \int_s^t\int_\Omega\sum_{i=1}^n D_i u_i\bigg|
  \na\bigg(\log\frac{u_i}{u_0}+z_i\Phi\bigg)\bigg|^2 dxd\sigma
  \le H(u(s)).
\end{equation}
The energy dissipation is understood in the sense
$$
  u_i\bigg|\na\bigg(\log\frac{u_i}{u_0}+z_i\Phi\bigg)\bigg|^2
  = \big|2\na\sqrt{u_i} - \sqrt{u_i}\na\log u_0 + \sqrt{u_i}z_i\na\Phi
  \big|^2.
$$
\end{theorem}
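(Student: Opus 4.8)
The plan is to use the boundedness-by-entropy method of \cite{Jue15}, the main additional work coming from the coupling with the nonlocal fourth-order Poisson--Fermi equation. First I would solve a regularized, time-discrete problem formulated in the entropy variables \eqref{1.wi}. Fix $\tau=T/M>0$ and $\eps>0$, choose an integer $m>d/2$ (so that $H^m(\Omega)\hookrightarrow L^\infty(\Omega)$), and for $k=1,\dots,M$ determine recursively, given $u^{k-1}$, functions $w^k=(w_1^k,\dots,w_n^k)$ in a suitable closed subspace of $H^m(\Omega)^n$ encoding the boundary conditions together with $\Phi^k\in\Phi^D+H^2_{D,N}(\Omega)$ such that $\Phi^k$ solves the stationary weak Poisson--Fermi equation \eqref{1.weak2} with $u_i=u_i(w^k,\Phi^k)$ from \eqref{1.ui}, and, for all admissible test functions $\phi_i$,
\begin{align*}
  \frac1\tau\int_\Omega\big(u_i(w^k,\Phi^k)-u_i^{k-1}\big)\phi_i\,dx
  &+ \int_\Omega D_i u_i(w^k,\Phi^k)\,\na w_i^k\cdot\na\phi_i\,dx \\
  &+ \eps\int_\Omega\Big(\sum_{|\alpha|\le m}\pa^\alpha w_i^k\,\pa^\alpha\phi_i\Big)dx
  = \int_\Omega r_i\big(u(w^k,\Phi^k)\big)\phi_i\,dx .
\end{align*}
For fixed $\tau,\eps$ this system is solved by a Leray--Schauder fixed-point argument: given $w$, the Poisson--Fermi problem has a unique solution $\Phi(w)$ by the Lax--Milgram lemma, because the bilinear form $(\Psi,\theta)\mapsto\lambda^2\int_\Omega(\ell^2\Delta\Psi\,\Delta\theta+\na\Psi\cdot\na\theta)\,dx$ is coercive on $H^2_{D,N}(\Omega)$ (using $\operatorname{meas}(\Gamma_D)>0$, the Poincar\'e inequality, and $H^2$-elliptic regularity on a Lipschitz domain), and $\Phi(w)\in L^\infty(\Omega)$ since $d\le3$; then the linear elliptic problem for $w^k$ is uniquely solvable since $u_i(w,\Phi)\in\overline\dom$ is bounded and the $\eps$-form is coercive on $H^m$. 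The decisive point is that by construction \eqref{1.ui} any solution satisfies $u_i(w^k,\Phi^k)(x)\in\dom$, and this bound (in its closed form $\overline\dom$) survives all limits.

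The a priori estimates follow from a discrete free energy inequality. Choosing $\phi_i=w_i^k-w_i^D=\pa h/\pa u_i$ in the $i$th equation, summing over $i$, using the chain rule for the dependence $\Phi=\Phi(u)$ (cf.\ \cite[Lemma~7]{GeJu18}) and the convexity of $h$, one arrives, after the discrete Gronwall lemma and Hypothesis (H5), at the uniform bound
\begin{align*}
  \max_{1\le k\le M}H(u^k) + \tau\sum_{k=1}^M\int_\Omega\sum_{i=1}^n D_i u_i^k|\na w_i^k|^2\,dx
  + \eps\tau\sum_{k=1}^M\|w^k\|_{H^m(\Omega)}^2 \le C(T).
\end{align*}
Writing $u^{(\tau)},\Phi^{(\tau)}$ for the piecewise constant interpolants in time and using the dissipation identity from the statement, $u_i|\na w_i|^2=|2\na\sqrt{u_i}-\sqrt{u_i}\na\log u_0+\sqrt{u_i}z_i\na\Phi|^2$, together with $\int_\Omega|\na(\Phi-\Phi^D)|^2\,dx\le C H(u)$ (so the electric term is absorbed), gives uniform bounds for $\sqrt{u_i^{(\tau)}}$ in $L^2(0,T;H^1(\Omega))$, for $\log u_0^{(\tau)}$ in $L^2(0,T;H^1(\Omega))$, and, via the linear Poisson--Fermi equation, for $\Phi^{(\tau)}$ in $L^2(0,T;H^2(\Omega))$. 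Hence the fluxes $J_i^{(\tau)}=-D_i(2\sqrt{u_i^{(\tau)}}\na\sqrt{u_i^{(\tau)}}-u_i^{(\tau)}\na\log u_0^{(\tau)}+u_i^{(\tau)}z_i\na\Phi^{(\tau)})$ are bounded in $L^2(\Omega_T)$, and \eqref{1.B} shows that the discrete time derivative of $u_i^{(\tau)}$ is bounded in $L^2(0,T;H^1_D(\Omega)')$, up to an $\eps$-term that vanishes as $\eps\to0$.

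Passing to the limit $(\tau,\eps)\to0$, a discrete Aubin--Lions lemma (cf.\ \cite{Jue15}) provides a subsequence along which $u_i^{(\tau)}\to u_i$ strongly in $L^2(\Omega_T)$ and a.e., with $u_i(x,t)\in\overline\dom$; then $\sqrt{u_i^{(\tau)}}\to\sqrt{u_i}$ in $L^2(\Omega_T)$ and $\na\sqrt{u_i^{(\tau)}}\rightharpoonup\na\sqrt{u_i}$ weakly in $L^2(\Omega_T)$, and similarly for $\log u_0^{(\tau)}$ and $\sqrt{u_i^{(\tau)}}\na\log u_0^{(\tau)}$. As the solution operator of the Poisson--Fermi equation is a bounded linear map from $L^2(\Omega)$ (the charge density) to $H^2(\Omega)$, strong $L^2$-convergence of $\sum_i z_i u_i^{(\tau)}$ forces $\Phi^{(\tau)}\to\Phi$ in $L^2(0,T;H^2(\Omega))$, so $\na\Phi^{(\tau)}\to\na\Phi$ in $L^2(\Omega_T)$. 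These convergences allow me to identify $\na u_i=2\sqrt{u_i}\na\sqrt{u_i}$, to pass to the limit in the three flux terms and in the reaction terms (by continuity of $r_i$ and dominated convergence, using $0\le u_i\le1$), and to let the $\eps$-contributions disappear, which yields \eqref{1.weak1}--\eqref{1.weak2}. The regularity $u_i\in H^1(0,T;H^1_D(\Omega)')\cap C^0([0,T];L^2(\Omega))$ and the a.e.\ initial condition follow in the standard way from $\pa_t u_i\in L^2(0,T;H^1_D(\Omega)')$ and $u_i\in L^2(0,T;H^1(\Omega))$, and the Dirichlet traces pass to the limit because $w_i^{(\tau)}-w_i^D$ and $\Phi^{(\tau)}-\Phi^D$ vanish on $\Gamma_D$ and the trace operator is weakly continuous.

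Finally, if $r_i\equiv0$ and the boundary data are in equilibrium, the discrete estimate becomes the genuine dissipation inequality $H(u^k)+\tau\sum(\text{dissipation})\le H(u^{k-1})$; summing from the time index of $s$ up to that of $t$ and invoking the weak lower semicontinuity of the convex functional $H$ and of the dissipation integral (a convex functional of $(\na\sqrt{u_i},\sqrt{u_i}\na\log u_0,\na\Phi)$) gives \eqref{1.fei}, first for a.e.\ $0<s<t<T$ and then, using continuity in time, for all such $s,t$. \emph{The main obstacle} I expect is the extraction of the concentration gradient bounds from the degenerate dissipation $\sum_i D_i u_i|\na w_i|^2$: where $u_i=0$ this expression carries no information about $\na u_i$, and one must show that, after subtracting the electric contribution $\sqrt{u_i}z_i\na\Phi$ (which the free energy controls), it still dominates $\|\na\sqrt{u_i}\|_{L^2}^2$ and $\|\na\log u_0\|_{L^2}^2$ --- this relies on an algebraic positivity property of the transformed dissipation on the tangent space of the simplex $\dom$. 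Further technical points are the treatment of the singular drift $\na\log u_0$ near $u_0=0$, the handling of the boundary data $w^D$ (for which only $\log u_0^D\in L^2(\Omega)$ is assumed), and the coercivity and regularity theory of the fourth-order Poisson--Fermi operator under the mixed boundary conditions \eqref{1.bcphi} on a merely Lipschitz domain.
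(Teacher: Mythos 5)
Your proposal follows essentially the same route as the paper: implicit Euler discretization plus an $H^m$ regularization in the entropy variables, a Leray--Schauder fixed point, a discrete free energy inequality yielding the uniform bounds, and an Aubin--Lions compactness argument, with \eqref{1.fei} recovered by weak lower semicontinuity of the dissipation. The ``main obstacle'' you flag is precisely the paper's Lemma \ref{lem.ineq}, which is settled by absorbing the $z_i\na\Phi$ term via Young's inequality and then using the elementary identity $\tfrac12\sum_{i=1}^n u_i|\na\log(u_i/u_0)|^2=\tfrac12\sum_{i=1}^n|\na u_i|^2/u_i+\tfrac12|\na\log u_0|^2+|\na u_0|^2/(2u_0)$, which dominates $2\sum_i|\na\sqrt{u_i}|^2+\tfrac12|\na\log u_0|^2+\tfrac12|\na u_0|^2$.
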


The assumption of thermal equilibrium at the Dirichlet boundary, also required in \cite{GeJu18}, is needed to avoid expressions involving $\na w_i^D$ in the free energy inequality. Thus, this condition, together with vanishing reactions, is natural to obtain the monotonicity of the free energy. We refer to Remark \ref{rem.feir} for the case with reaction rates. 

The proof of Theorem \ref{thm.ex} is, similarly as in \cite{GeJu18}, based on an approximation procedure, where we regularize \eqref{1.B} by an implicit Euler approximation and higher-order terms in the entropy variables. The uniform estimates that are needed to perform the de-regularization limit are derived from the free energy inequality, which (without regularization) reads as
\begin{equation}\label{1.aux}
  \frac{dH}{dt} + \int_\Omega\sum_{i=1}^n D_iu_i|\na w_i|^2 dx
  \le \int_\Omega\sum_{i=1}^n r_i(u)\cdot\frac{\pa h}{\pa u_i}dx
  \le C_r(1 + H(u)),
\end{equation}
recalling definition \eqref{1.wi} of $w_i$,
and we can conclude by Gronwall's lemma.
The free energy dissipation term on the left-hand side can be estimated from above by (see Lemma \ref{lem.ineq})
$$
  \int_\Omega u_i|\na w_i|^2 dx 
  \ge \frac12\int_\Omega\big(|\na\sqrt{u_i}|^2
  + |\na\log u_0|^2 + |\na u_0|^2\big)dx - C\int_\Omega|\na\Phi|^2dx.
$$
The last term is bounded by the electric energy part in $H(u)$,
thus giving $H^1(\Omega)$ bounds for $u_i$ for $i=0,\ldots,n$ and 
$\log u_0$. Compared to \cite{GeJu18}, we obtain gradient estimates for all the ion concentrations, but we have to deal with the singular term $\na\log u_0$ in \eqref{1.mass}.
Moreover, compared to \cite{GaFu22}, where a similar Nernst--Planck system (with $\ell=0$) was investigated, we do not need any positivity condition on the initial solvent concentration. 

While the existence proof relies on standard entropy methods, we need a new idea to prove the weak--strong uniqueness result. The uniqueness of {\em weak} solutions is an intricate problem. A uniqueness result for \eqref{1.mass} with the fluxes \eqref{1.gersten} was shown in \cite{GeJu18} for the case $D_i=D$ and $z_i=z$ for all $i$. In this simplified situation, the solvent concentration solves a Poisson--Nernst--Planck system for which the uniqueness of bounded weak solutions can be proved by a combination of $L^2(\Omega)$ estimates and Gajewski's entropy method. This strategy cannot be used for our system. In fact, we need the $H^{-1}(\Omega)$ method and a strong regularity condition for $\na\Phi$, which restricts the geometry of the Dirichlet--Neumann boundary conditions; see Remark \ref{rem.uniq}.
Therefore, we do not aim to prove the uniqueness of weak solutions but the weak--strong uniqueness property only, which has the advantage that we may allow for different coefficients $D_i$ and $z_i$. The weak--strong uniqueness property means that any weak solution to system \eqref{1.mass}--\eqref{1.bcphi} coincides with a strong solution emanating from the same initial conditions as long as the latter exists.
We say that $(\bar{u},\bar\Phi)$ is a {\em strong solution} to
\eqref{1.mass}--\eqref{1.bcphi} if it is a weak solution and 
$$
  \bar{u}_i\ge c>0\mbox{ in }\Omega_T, \quad
  \bar{u}_i,\,\Phi\in L^\infty(0,T;W^{1,\infty}(\Omega))
  \quad\mbox{for all }i=1,\ldots,n.
$$ 
Our second main result is contained in the following theorem.

\begin{theorem}[Weak--strong uniqueness]\label{thm.wsu}
Let the Dirichlet boundary data be in thermal equilibrium in the sense of Theorem \ref{thm.ex} and let $r_i=0$ for $i=1,\ldots,n$.
Let $(u,\Phi)$ be a weak solution and $(\bar{u},\bar\Phi)$
be a strong solution to \eqref{1.mass}--\eqref{1.bcphi}.
Then $u(x,t)=\bar{u}(x,t)$, $\Phi(x,t)=\bar\Phi(x,t)$
for a.e.\ $x\in\Omega$ and $t\in(0,T)$.
\end{theorem}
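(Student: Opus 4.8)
The plan is to run a relative‑entropy (modulated free energy) argument on an \emph{enlarged} cross‑diffusion system in which the solvent concentration is carried along as an extra unknown. The point of the enlargement is that it turns the saturation identity $\sum_{i=0}^n u_i=1$ into a genuine linear constraint: the associated diffusion matrix is only positive semidefinite, but it is positive definite on the subspace compatible with that constraint, and this is exactly what makes the relative dissipation coercive. Concretely, for both solutions I would set $u_0:=1-\sum_{i=1}^n u_i$, $\bar u_0:=1-\sum_{i=1}^n\bar u_i$ and append the equation $\pa_t u_0+\diver J_0=0$ with $J_0:=-\sum_{i=1}^n J_i$. Writing $\mu_j:=\log u_j+z_j\Phi$ for $j=0,\dots,n$ (with $z_0:=0$), the fluxes become $J_i=-\sum_{j=0}^n B^e_{ij}(u)\na\mu_j$, where $B^e$ is symmetric, positive semidefinite, satisfies $\xi^\top B^e(u)\xi=\sum_{i=1}^n D_iu_i(\xi_i-\xi_0)^2$, and has kernel $\R(1,\dots,1)$. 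A strong solution necessarily has $\bar u_0\ge c>0$ as well (otherwise $\na\log\bar u_0\notin L^\infty$), so $\bar u_i\ge c$ and $\na\bar w_i,\ \bar u_i^{-1}\in L^\infty(\Omega_T)$ for all $i=0,\dots,n$, where $\bar w_i=\log(\bar u_i/\bar u_0)+z_i\bar\Phi$.

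Next I would introduce the relative free energy
\[
  \mathcal H(t):=\int_\Omega\sum_{i=0}^n\Big(u_i\log\frac{u_i}{\bar u_i}-u_i+\bar u_i\Big)\,dx
   +\frac{\lambda^2}{2}\int_\Omega|\na(\Phi-\bar\Phi)|^2\,dx+\frac{(\lambda\ell)^2}{2}\int_\Omega|\Delta(\Phi-\bar\Phi)|^2\,dx ,
\]
which is precisely $H(u)-H(\bar u)-\langle H'(\bar u),u-\bar u\rangle$ (the boundary and linear contributions cancel, the electric part is quadratic, and the variational derivative of the electric energy is $z_i(\Phi-\Phi^D)$ as in \cite[Lemma~7]{GeJu18}). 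Since $u_i\le1$ and $\bar u_i\ge c$, the first integral is nonnegative and comparable to $\|u-\bar u\|_{L^2(\Omega)}^2$; moreover $\Phi-\bar\Phi\in H^2_{D,N}(\Omega)$ solves the linear Poisson--Fermi equation with source $\sum_{i=1}^n z_i(u_i-\bar u_i)$, so testing it with $\Phi-\bar\Phi$ and using the Poincaré inequality gives that the electric part of $\mathcal H$ is bounded by $C\|u-\bar u\|_{L^2}^2\le C\mathcal H(t)$; in particular $\|\na(\Phi-\bar\Phi)\|_{L^2}^2\le C\mathcal H(t)$, and $\mathcal H(t)=0$ forces $u(\cdot,t)=\bar u(\cdot,t)$, $\Phi(\cdot,t)=\bar\Phi(\cdot,t)$.

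The core step is the differential inequality for $\mathcal H$. I would combine the free energy inequality \eqref{1.fei} for the weak solution with the free energy \emph{equality} for the strong solution (admissible because the latter is regular enough to make every manipulation rigorous) and with the time derivative of the cross term $\int_\Omega\sum_{i=1}^n(\bar w_i-w_i^D)(u_i-\bar u_i)\,dx$. The latter is computed by using \eqref{1.weak1} for $u_i-\bar u_i$ with the strong‑solution chemical potentials $\bar w_i-w_i^D\in L^2(0,T;H^1_{D}(\Omega))$ (legitimate since $\bar u_i\ge c$), and \eqref{1.weak2} together with its analogue for $\bar\Phi$ tested with $\pa_t(\Phi-\bar\Phi)$; thermal equilibrium of the (common) Dirichlet data makes $w_i-\bar w_i$ vanish on $\Gamma_D$, so all boundary terms drop. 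After cancellations one is left with
\[
  \mathcal H(t)+\int_0^t\!\!\int_\Omega\mathcal D\,dx\,d\sigma\ \le\ \int_0^t g(\sigma)\,\mathcal H(\sigma)\,d\sigma ,
\]
where $\mathcal D\ge0$ is a quadratic form in $\na(\mu_j-\bar\mu_j)$ built from $B^e(u)$ (augmented by the partial dissipation generated by the $\bar u$‑equation), and $g\in L^1(0,T)$ absorbs the Lipschitz‑in‑$u$ bound for $B^e$, the bounds $\na\bar w_i,\ \bar u_i^{-1}\in L^\infty$, and $\|\pa_t\bar w_i\|_{L^2}$. The crucial point is that the vector of differences lies in the subspace $\{\xi:\sum_{i=0}^n\xi_i=0\}$ imposed by the saturation constraint, and on that subspace the extended matrix — equivalently the matrix obtained after passing to the variables $\sqrt{u_i}-\sqrt{\bar u_i}$, $\log(u_0/\bar u_0)$, $\Phi-\bar\Phi$ in the spirit of Lemma~\ref{lem.ineq} — is positive definite, so $\int_\Omega\mathcal D\,dx\ge c\sum_{i=1}^n\|\na(w_i-\bar w_i)\|_{L^2}^2-C\|\na(\Phi-\bar\Phi)\|_{L^2}^2$ with $\|\na(\Phi-\bar\Phi)\|_{L^2}^2\le C\mathcal H$. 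The dissipation then absorbs the $L^2$‑type gradient factors in the error terms, leaving a remainder of order $\mathcal H$; since $\mathcal H(0)=0$ and $g\in L^1(0,T)$, Gronwall's lemma yields $\mathcal H\equiv0$, i.e.\ $u=\bar u$ and $\Phi=\bar\Phi$ a.e.\ in $\Omega_T$.

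The main obstacle is precisely this coercivity of the relative dissipation: a direct computation produces a dissipation carrying the (possibly vanishing) weak‑solution weights $u_i$ rather than the strictly positive $\bar u_i$, so on its own it cannot control the error terms containing $\na(w_i-\bar w_i)$; the extended system is what repairs this, since its extra solvent equation promotes the saturation identity to a linear constraint and its diffusion matrix is positive definite on the associated subspace, whence the combination of the weak‑solution and strong‑solution dissipations is again strictly positively weighted. A related nuisance is the low regularity of the weak solution — only $\sqrt{u_i}\in L^2(0,T;H^1(\Omega))$, so neither $\log u_i$ nor $w_i$ need lie in $H^1$ — which forces testing with strong‑solution quantities throughout and, wherever $w_i-\bar w_i$ occurs as a test function, a truncation‑and‑limit argument. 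By contrast, the nonlocal dependence of $\Phi$ on $u$ is mild and is handled through the linear equation for $\Phi-\bar\Phi$ and its energy bound.
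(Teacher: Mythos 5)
Your overall strategy is the paper's: enlarge the system by the solvent equation so that saturation becomes a linear constraint, take the relative free energy to be the Bregman distance of $H$ (including the $i=0$ term and the electric part), exploit positive definiteness of the extended diffusion matrix on the constraint subspace, absorb the error terms into the relative dissipation using $\na\log\bar u_j,\na\bar\Phi\in L^\infty(\Omega_T)$ and $\bar u_i\ge c$, and close with Gronwall after a Csisz\'ar--Kullback-type lower bound on the entropy part. The preliminary observations (that $\Phi-\bar\Phi$ solves the linear Poisson--Fermi equation with source $\sum_i z_i(u_i-\bar u_i)$, so its energy is controlled by $\|u-\bar u\|_{L^2}^2$, and that only strong-solution quantities may be used as test functions) are also as in the paper.

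There is, however, one genuine gap in the step you yourself identify as the core difficulty. The claimed coercivity
$\int_\Omega\mathcal D\,dx\ \ge\ c\sum_{i=1}^n\|\na(w_i-\bar w_i)\|_{L^2(\Omega)}^2-C\|\na(\Phi-\bar\Phi)\|_{L^2(\Omega)}^2$
is false: the relative dissipation produced by the computation is $\int_\Omega\sum_{i,j}A_{ij}\na\log(u_i/\bar u_i)\cdot\na\log(u_j/\bar u_j)\,dx$ with $A$ \emph{linear in the weak-solution concentrations} $u_i$, which may vanish; no combination with the strong-solution equation removes these weights, and indeed $\na\log u_i$ need not even be locally integrable where $u_i=0$. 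What actually saves the argument is not an unweighted lower bound but a matching of weights: setting $Y_i=\sqrt{u_i}\na\log(u_i/\bar u_i)\in L^2(\Omega_T)$, the normalized matrix $(A_{ij}/\sqrt{u_iu_j})$ is bounded below by $D_*$ on the subspace $L=\{y:\sum_i\sqrt{u_i}\,y_i=0\}$ (after separating the $|(P_LY)_0|^2/u_0$ contribution), and \emph{every} error term --- the cross terms with $\na(\Phi-\bar\Phi)$ and the commutator terms $A_{ij}/u_i-\bar A_{ij}/\bar u_i$, $Q_{ij}/u_i-\bar Q_{ij}/\bar u_i$ --- again carries exactly the factors $\sqrt{u_i}$ (or $u_i/\sqrt{u_0}$) needed to pair with $(P_LY)_i$, while the $P_{L^\perp}Y$ components reduce to $\sqrt{u_i}\sum_j(\bar u_j-u_j)\na\log\bar u_j$ and hence to zero-order differences. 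Young's inequality then absorbs these into the \emph{weighted} dissipation $\int(|(P_LY)_0|^2/u_0+\sum_i|(P_LY)_i|^2)$, leaving only $\|u-\bar u\|_{L^2}^2+\|\na(\Phi-\bar\Phi)\|_{L^2}^2\le CH(u,\Phi|\bar u,\bar\Phi)$. Without this weight-matching bookkeeping your absorption step, as written, does not go through.
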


If the reaction rates are Lipschitz continuous and satisfy some sign conditions, Theorem \ref{thm.wsu} still holds; see Remark \ref{rem.reac}. The condition that the Dirichlet boundary data is in thermal equilibrium is actually not needed, since in contrast to \eqref{1.fei}, the terms involving $\na w_i^D$ cancel out in the computations for the relative free energy
\begin{align*}
  & H(u,\Phi|\bar{u},\bar{\Phi}) = \int_\Omega\big(h_1(u|\bar{u})
  + h_2(\Phi|\bar{\Phi})\big)dx, \quad\mbox{where} \\
  & h_1(u|\bar{u}) = \sum_{i=0}^n\bigg(u_i\log\frac{u_i}{\bar{u}_i}
 - (u_i-\bar{u}_i)\bigg), \\
  & h_2(\Phi|\bar{\Phi}) = \frac{\lambda^2}{2}\big(
  |\na(\Phi-\bar{\Phi})|^2 + \ell^2|\Delta(\Phi-\bar{\Phi})|^2\big),
\end{align*}
which can be identified as the Bregman distance of the free energy.
The key idea of the proof of Theorem \ref{thm.wsu} is to consider the solvent concentration $u_0$ as an independent variable and to formulate the parabolic equations for the extended concentration vector $U=(u_0,u_1,\ldots,u_n)$, leading to 
$$
  \pa_t u_i = \diver\sum_{j=0}^n(A_{ij}(U)\na\log u_j
  + Q_{ij}(U)\na\Phi), \quad i=0,\ldots,n,
$$
where $A_{ij}(U)$ and $Q_{ij}(U)$ depend linearly on $U$ (see \eqref{3.AP}). The matrix $(A_{ij}/\sqrt{u_iu_j}) \in\R^{(n+1)\times(n+1)}$ is positive definite only on the subspace $L=\{y\in\R^{n+1}:
\sum_{i=0}^n\sqrt{u_i}y_i=0\}$. This situation is similar to the Maxwell--Stefan system; see \cite{HJT22}. The time derivative of the relative free energy equals
\begin{align*}
  \frac{dH}{dt}(u,\Phi|\bar{u},\bar\Phi) = K_1 + K_2, 
  \quad\mbox{where }
  K_1 = -\int_\Omega\sum_{j=0}^n A_{ij}\na\log\frac{u_i}{\bar{u}_i}
  \cdot\na\log\frac{u_j}{\bar{u}_j}dx,
\end{align*}
and $K_2$ contains differences like $U_i-\bar{U}_i$ and $\Phi-\bar\Phi$. The properties of the matrices $(A_{ij})$ and $(Q_{ij})$ imply that
\begin{equation*}
  K_1 \le -\min_{i=1,\ldots,n}D_i
  \int_\Omega\bigg(\frac{1}{u_0}|(P_LY)_0|^2
  + \sum_{i=1}^n|(P_LY)_i|^2\bigg)dx,
\end{equation*}
where $P_L$ is the projection on $L$ and $Y_i=\sqrt{u_i}\na\log(u_i/\bar{u}_i)$, as well as
for any $\delta>0$,
\begin{align*}
  K_2 &\le \delta\int_\Omega\bigg(\frac{1}{u_0}|(P_LY)_0|^2
    + \sum_{i=1}^n|(P_LY)_i|^2\bigg)dx \\
  &\phantom{xx}+ C(\delta)\bigg(\sum_{i=0}^n\|u_i-\bar{u}_i\|_{L^2(\Omega)}^2
  + \|\na(\Phi-\bar\Phi)\|_{L^2(\Omega)}^2\bigg).
\end{align*}
Consequently, choosing $\delta>0$ sufficiently small,
\begin{equation*}
  \frac{dH}{dt}(u,\Phi|\bar{u},\bar\Phi)
  \le C\bigg(\sum_{i=0}^n\|u_i-\bar{u}_i\|_{L^2(\Omega)}^2
  + \|\na(\Phi-\bar\Phi)\|_{L^2(\Omega)}^2\bigg)
  \le CH(u,\Phi|\bar{u},\bar\Phi)
\end{equation*}
for some constant $C>0$. Since the initial data of $u$ and $\bar{u}$ coincide, we have $H((u,\Phi)(t)|$ $(\bar{u},\bar\Phi)(t))=0$ and
finally $u(t)=\bar{u}(t)$ and $\Phi(t)=\bar\Phi(t)$ for all $t>0$.
The idea to consider the parabolic system for the extended solution vector $U=(u_0,\ldots,u_n)$ instead of $u=(u_1,\ldots,u_n)$ is the main novelty of this paper.

The article is organized as follows.
The proof of Theorem \ref{thm.ex} is presented in Section \ref{sec.ex}, while Section \ref{sec.wsu} contains the proof of Theorem \ref{thm.wsu}.
We make some remarks  on the uniqueness of solutions in Section \ref{sec.rem}.


\section{Proof of Theorem \ref{thm.ex}}\label{sec.ex}

We assume throughout this section that Hypotheses (H1)--(H5) hold.

\subsection{Solution of an approximate system}
 
We define the approximate problem by the implicit Euler scheme and using a higher-order regularization. Let $T>0$, $N\in\N$, $\tau=T/N$, and $m\in\N$ with $m>d/2$. We assume that $u_i^D\ge \eta>0$ for $i=0,\ldots,n$. Then $w_i^D=\log(u_i^D/u_0^D)+z_i\Phi^D\in H^1(\Omega;\R^n)\cap L^\infty(\Omega;\R^n)$. Since the entropy variables are not needed in the weak formulation \eqref{1.weak1}--\eqref{1.weak2}, we can pass to the limit $\eta\to 0$ at the end of the proof, thus requiring only $u_i^D>0$. Let $k\ge 1$ and let
$u^{k-1}-u^D\in H^1_D(\Omega;\R^n)\cap L^\infty(\Omega;\R^n)$ and $\Phi^{k-1}-\Phi^D\in H_{D,N}^2(\Omega)$ be given. If $k=1$, $\Phi^0\in H^2(\Omega)$ is the unique solution to \eqref{1.pf} with $u_j^0$ instead of $u_j$ on the right-hand side and satisfying the corresponding boundary conditions
in \eqref{1.bcu}--\eqref{1.bcphi}. We wish to find a solution $v^k\in
X:=H^m(\Omega;\R^n)\cap H_D^1(\Omega;\R^n)$ and $\Phi^k-\Phi^D\in H_{D,N}^2(\Omega)$ to
\begin{align}\label{2.approx1}
  \frac{1}{\tau}\int_\Omega & (u^k-u^{k-1})\cdot\phi dx
	+ \int_\Omega \na\phi:B(v^k+w^D,\Phi^k)\na(v^k+w^D)dx \\
	&{}+ \eps\int_\Omega\bigg(\sum_{|\alpha|=m}D^\alpha v^k\cdot D^\alpha\phi 
	+ v^k\cdot\phi\bigg)dx = \int_\Omega r(u^k)\cdot\phi dx, \nonumber \\
	\lambda^2\int_\Omega & \big(\ell^2\Delta\Phi^k\Delta\theta
	+ \na\Phi^k\cdot\na\theta\big) dx
	= \int_\Omega\bigg(\sum_{i=1}^n z_iu_i^k + f\bigg)\theta dx \label{2.approx2}
\end{align}
for all $\phi\in X$ and $\theta\in H_{D,N}^2(\Omega)$.
Here, we have set $u^k:=u(v^k+w^D,\Phi^k)$, where $u(w,\Phi)$ is defined by \eqref{1.ui}, $B_{ij}(w,\Phi)=D_i u_i(w,\Phi)\delta_{ij}$, $r(u)=(r_1(u),\ldots,r_n(u))$, and 
$D^\alpha=\pa^{|\alpha|}/\pa x_1^{\alpha_1}\cdots\pa x_d^{\alpha_d}$ is a partial derivative
of order $|\alpha|=\alpha_1+\cdots+\alpha_d$. Thanks to the higher-order regularization, we obtain approximate solutions $w^k:=v^k+w^D\in H^m(\Omega;\R^n)\hookrightarrow L^\infty(\Omega;\R^n)$. Moreover, since $d\le 3$, we have $\Phi^k\in H^2(\Omega)\hookrightarrow L^\infty(\Omega)$. Hence, $u_i(w^k,\Phi^k)$ is well defined and integrable.

\begin{remark}\label{rem.Phi}\rm
Adding a higher-order regularization to the Poisson--Fermi equation \eqref{2.approx2}, we may obtain $\Phi^k\in L^\infty(\Omega)$ by a Sobolev embedding similarly as for $w^k$. This allows us to remove the restriction $d\le 3$ in Hypothesis (H1). 
\qed\end{remark}

\begin{lemma}
There exists a unique solution $v^k\in H^m(\Omega;\R^n)\cap H_D^1(\Omega;\R^n)$ and $\Phi^k-\Phi^D\in H^2_{D,N}(\Omega)$ to \eqref{2.approx1}--\eqref{2.approx2}.
\end{lemma}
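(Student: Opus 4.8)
The plan is to solve the coupled system \eqref{2.approx1}--\eqref{2.approx2} by a fixed-point argument combined with the Leray--Schauder theorem, and to get uniqueness from the uniform ellipticity provided by the $\eps$-regularization. First I would decouple the two equations: given $w\in L^\infty(\Omega;\R^n)$, I solve \eqref{2.approx2} for $\Phi=\Phi(w)$. Since \eqref{2.approx2} is a linear fourth-order elliptic problem with the coercive bilinear form $(\Phi,\theta)\mapsto\lambda^2\int_\Omega(\ell^2\Delta\Phi\Delta\theta+\na\Phi\cdot\na\theta)dx$ on $H^2_{D,N}(\Omega)$ (coercivity follows from $\operatorname{meas}(\Gamma_D)>0$ via a Poincar\'e-type inequality), the Lax--Milgram lemma yields a unique $\Phi(w)-\Phi^D\in H^2_{D,N}(\Omega)$; moreover the right-hand side $\sum_i z_i u_i(w,\Phi)+f$ is bounded in $L^2(\Omega)$ uniformly (because $0\le u_i\le 1$ by \eqref{1.ui}), so $\|\Phi(w)\|_{H^2(\Omega)}\le C$ independently of $w$, and by $d\le 3$ we get $\Phi(w)\in L^\infty(\Omega)$ with a uniform bound.

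Next, for fixed $\Phi$ I solve the regularized parabolic-step equation \eqref{2.approx1} for $v=v^k$. The natural route is again Leray--Schauder applied to the map $S\colon w\mapsto v$ where, given $w\in L^\infty$, one sets $u=u(w,\Phi(w))$, $B=B(w,\Phi(w))$ in the diffusion and reaction terms (freezing the nonlinearities) and solves the resulting \emph{linear} elliptic problem for $v$: the principal part $\eps\int_\Omega(\sum_{|\alpha|=m}D^\alpha v\cdot D^\alpha\phi+v\cdot\phi)dx$ is coercive on $X=H^m(\Omega;\R^n)\cap H^1_D(\Omega;\R^n)$, while the remaining terms are lower order (the $B$-term involves $\na w^D$ and $\na\Phi$, both in $L^2$, tested against $\na\phi\in L^2$, hence bounded using $H^m\hookrightarrow W^{1,\infty}$), so Lax--Milgram gives a unique $v\in X$. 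I would check that $S$ is continuous and compact (compactness from $H^m\hookrightarrow\hookrightarrow L^\infty$ for $m>d/2$, continuity from dominated convergence using $0\le u_i\le 1$ and continuity of $r_i$) and that the fixed-point set is bounded: testing \eqref{2.approx1} with $\phi=v^k$ (equivalently, using the entropy-variable structure, with $\phi=w^k-w^D=v^k$) gives $\eps\|v^k\|_{H^m}^2\le$ (terms controlled by the uniform $L^\infty$ bounds on $u^k,\Phi^k$ and by $\|u^{k-1}\|_{L^2}$, $\tau$, $C_r$, $H(u^{k-1})$), a bound uniform over the Leray--Schauder parameter $s\in[0,1]$. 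This yields existence of a solution to the full coupled system by, e.g., a second Leray--Schauder loop in the $\Phi$-variable, or more simply by working with the single map $w\mapsto v$ in which $\Phi(w)$ has already been substituted.

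For uniqueness I would argue directly. Suppose $(v_1,\Phi_1)$ and $(v_2,\Phi_2)$ are two solutions, write $w_a=v_a+w^D$, and subtract. First subtract the two copies of \eqref{2.approx2} and test with $\theta=\Phi_1-\Phi_2\in H^2_{D,N}(\Omega)$: the left side is $\ge c\|\Phi_1-\Phi_2\|_{H^2}^2$, the right side is $\int_\Omega\sum_i z_i(u_i^1-u_i^2)(\Phi_1-\Phi_2)dx$, and since $u\mapsto u_i(w,\cdot)$ is Lipschitz (the map \eqref{1.ui} is smooth with bounded derivatives on the relevant range), $|u_i^1-u_i^2|\le C(|w_1-w_2|+|\Phi_1-\Phi_2|)=C(|v_1-v_2|+|\Phi_1-\Phi_2|)$, so we obtain $\|\Phi_1-\Phi_2\|_{H^2}^2\le C\|v_1-v_2\|_{L^2}(\|v_1-v_2\|_{L^2}+\|\Phi_1-\Phi_2\|_{L^2})$, hence after absorbing, $\|\Phi_1-\Phi_2\|_{H^2}\le C\|v_1-v_2\|_{L^2}$. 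Then subtract the two copies of \eqref{2.approx1} and test with $\phi=v_1-v_2\in X$. The $\eps$-term gives $\eps\|v_1-v_2\|_{H^m}^2$; the $\frac1\tau$-term contributes $\frac1\tau\int_\Omega(u^1-u^2)\cdot(v_1-v_2)dx$, which (monotonicity of the entropy map $w\mapsto u$) is nonnegative plus, at worst, an error controlled by $\frac C\tau\|\Phi_1-\Phi_2\|_{L^2}\|v_1-v_2\|_{L^2}$; the reaction and $B$-difference terms are lower order, estimated by the $L^\infty$ bounds and by $\|v_1-v_2\|_{L^2}+\|\Phi_1-\Phi_2\|_{H^2}$, times $\|\na(v_1-v_2)\|_{L^2}$, which is absorbed into $\eps\|v_1-v_2\|_{H^m}^2$ by Young's inequality (here one uses $H^m\hookrightarrow W^{1,\infty}$ to keep the frozen coefficients bounded). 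Substituting the $\Phi$-estimate and choosing constants yields $\eps\|v_1-v_2\|_{H^m}^2\le 0$, hence $v_1=v_2$ and then $\Phi_1=\Phi_2$.

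The main obstacle I anticipate is organizing the fixed point so that the two equations are genuinely coupled without circularity — that is, making sure the a priori bound used in Leray--Schauder is uniform in the homotopy parameter \emph{and} survives the substitution $\Phi=\Phi(w)$; this is where one must be careful to feed the uniform $L^\infty$ bound on $\Phi(w)$ (valid because $0\le u_i\le1$ regardless of $w$) into the estimates for the parabolic step, rather than some bound that itself depends on $w$. A secondary technical point is the monotonicity/Lipschitz control of the map $w\mapsto u(w,\Phi)$ from \eqref{1.ui}: one needs that its Jacobian is symmetric positive semidefinite (this is exactly the statement that $B$ comes from an entropy) to handle the $\frac1\tau$-term in the uniqueness argument, and bounded to handle the error terms — both are elementary but must be invoked explicitly.
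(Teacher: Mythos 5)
Your existence argument is essentially the paper's: freeze the entropy variable, solve the Poisson--Fermi equation, solve the linearized $v$-problem by Lax--Milgram using the $\eps$-coercivity in $H^m(\Omega;\R^n)$, and close with Leray--Schauder via the compact embedding $H^m\hookrightarrow\hookrightarrow L^\infty$ for $m>d/2$ and the uniform bound obtained by testing with $v$. Two caveats, one minor and one substantive.

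First, the $\Phi$-step is not linear: since $u^k=u(v^k+w^D,\Phi^k)$, the right-hand side of \eqref{2.approx2} is $\sum_iz_iu_i(w,\Phi)+f$ and depends on the unknown $\Phi$, so Lax--Milgram alone does not produce $\Phi(w)$. One needs an extra argument --- either a fixed point in $\Phi$ or, more cleanly, the monotonicity $\pa_\Phi\sum_iz_iu_i(w,\Phi)=-\sum_iz_i^2u_i+(\sum_iz_iu_i)^2\le 0$ (Cauchy--Schwarz with $\sum_iu_i\le1$), which makes the semilinear problem uniquely solvable irrespective of the size of the Lipschitz constant. Your later ``absorption'' of the term $C\|\Phi_1-\Phi_2\|_{L^2}^2$ into the coercive left-hand side has the same issue: the Lipschitz constant of $\Phi\mapsto u_i(w,\Phi)$ need not be small relative to $\lambda^2\min\{1,\ell^2\}$, so one must exploit the sign of that term rather than its size.

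Second, and more importantly, your uniqueness argument does not close. (i) Under Hypothesis (H5) the $r_i$ are only continuous, not Lipschitz, so $r(u^1)-r(u^2)$ cannot be estimated by $\|v_1-v_2\|_{L^2}+\|\Phi_1-\Phi_2\|_{L^2}$. (ii) The error you concede in the $\tau^{-1}$-term is of order $\tau^{-1}\|v_1-v_2\|_{L^2}^2$, and the $B$-difference term $\int\na(v_1-v_2):(B^1-B^2)\na w^2\,dx$ is of order $\|\na w^2\|_{L^2}\|v_1-v_2\|_{H^m}^2$; neither can be absorbed into $\eps\|v_1-v_2\|_{H^m}^2$ without a smallness condition relating $\eps$, $\tau$, and the data, which is not available for fixed discretization parameters. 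For comparison, the paper's own proof (a sketch referring to Lemma~5 of \cite{GeJu18}) establishes only existence via Leray--Schauder and never uses uniqueness of the approximate solution downstream; so the existence half of your proposal matches the paper, while the uniqueness half would need either additional hypotheses (Lipschitz $r_i$, a constraint on $\eps,\tau$) or a genuinely different mechanism.
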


\begin{proof}
The proof is similar to that one of Lemma 5 in \cite{GeJu18}, therefore we give a sketch only.
Let $y\in L^\infty(\Omega;\R^n)$ and $\sigma\in[0,1]$. Let $\Phi^k\in H^2(\Omega)$ be
the unique solution to 
$$
  \lambda^2(\ell^2\Delta-1)\Delta\Phi^k = \sum_{i=1}^n z_iu_i(y+w^D,\Phi^k) + f(x)
	\quad\mbox{in }\Omega
$$
subject to the boundary conditions \eqref{1.bcphi}.
This follows from the fact that the function $(x,\Phi)\mapsto u_i(w(x),\Phi)$ is bounded 
with values in $(0,1)$ and Lipschitz continuous in $\Phi$.
By the Lax--Milgram lemma, there exists a unique solution 
$v\in X$ to the linear problem
\begin{align}\label{2.aux}
  \eps\int_\Omega&\bigg(\sum_{|\alpha|=m}D^\alpha v\cdot D^\alpha\phi + v\cdot\phi\bigg)ds
	+ \int_\Omega\na\phi:B(y+w^D,\Phi^k)\na v dx \\
	&= \delta\int_\Omega r(u(y+w^D,\Phi^k))\cdot\phi dx
	- \delta\int_\Omega\na\phi:B(y+w^D,\Phi^k)\na w^D dx \nonumber \\
	&\phantom{xx}{}- \frac{\delta}{\tau}\int_\Omega\big(u(y+w^D,\Phi^k)-u^{k-1}\big)
	\cdot\phi dx. \nonumber
\end{align}
Indeed, as $B$ is positive semidefinite, the left-hand side is coercive in $H^m(\Omega;\R^n)$.

This defines the fixed-point operator $S:L^\infty(\Omega;\R^n)\times[0,1]\to 
L^\infty(\Omega;\R^n)$, $S(y,\delta)=v$. Then $S(y,0)=0$, $S$ is continuous and, because
of the compact embedding $H^m(\Omega;\R^n)\hookrightarrow L^\infty(\Omega;\R^n)$, also
compact. Using $\phi=v$ as a test function in \eqref{2.aux}, standard estimates lead to
$\eps\|v\|_{H^m(\Omega)}^2 \le C(\tau)\|v\|_{H^m(\Omega)}$,
giving a bound for $v$ in $H^m(\Omega;\R^n)$ uniform in $\delta$. Hence, all fixed
points of $S(\cdot,\delta)$ are uniformly bounded in $L^\infty(\Omega;\R^n)$. We infer
from the Leray--Schauder fixed-point theorem that there exists $v^k\in X$ such that
$S(v^k,1)=v^k$. Then $(v^k,\Phi^k)$ is a solution to \eqref{2.approx1}--\eqref{2.approx2}.
\end{proof}

\subsection{Uniform estimates}

We deduce estimates uniform in $(\eps,\tau)$ from the following free energy inequality.

\begin{lemma}[Discrete free energy inequality]
Let $(v^k,\Phi^k)$ be a solution to \eqref{2.approx1}--\eqref{2.approx2} and set $w^k:=v^k+w^D$ and $u^k:=u(w^k,\Phi^k)$. Then
\begin{align}\label{2.dei}
  H(u^k) - H(u^{k-1}) &+ \frac{\tau}{2}\int_\Omega\sum_{i=1}^n 
  D_i u^k_i|\na w_i^k|^2 dx + \eps\tau\|w^k-w^D\|_{H^m(\Omega)}^2 \\
  &\le \tau C_r(1+H(u^k)) + \frac{\tau}{2}\int_\Omega\sum_{i=1}^n
  D_i |\na w^D_i|^2dx, \nonumber
\end{align}
where $H$ is defined in \eqref{1.H} and $C_r>0$ is introduced in Hypothesis (H5).
\end{lemma}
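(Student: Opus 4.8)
The plan is to test the approximate scheme \eqref{2.approx1} with $\phi = w^k - w^D = v^k$, which is the canonical entropy-variable choice, and to test the Poisson--Fermi equation \eqref{2.approx2} with the discrete time increment of the potential. The point is that $v^k \in X$ is an admissible test function in \eqref{2.approx1}, and that $w^k = v^k + w^D$ is precisely the variational derivative $\pa h/\pa u^k$ up to the boundary contribution $w^D$; here one uses \cite[Lemma 7]{GeJu18} to identify $\sum_i w_i^k\pa_t u_i^k$ (discretely) with a difference of free energies plus an electric contribution coming from \eqref{2.approx2}. So first I would rewrite the time-derivative term: by convexity of $h$ (more precisely, convexity of the internal-energy part and the quadratic electric part), the discrete chain rule gives
\[
  \frac{1}{\tau}\int_\Omega (u^k - u^{k-1})\cdot(w^k - w^D)\,dx
  \ge \frac{1}{\tau}\big(H(u^k) - H(u^{k-1})\big),
\]
where the mixed electric term is handled by testing \eqref{2.approx2} with $\theta = \Phi^k - \Phi^{k-1}$ (legitimate since $\Phi^k - \Phi^D, \Phi^{k-1}-\Phi^D \in H^2_{D,N}(\Omega)$) and again using convexity of the quadratic functional $\Phi \mapsto \frac{\lambda^2}{2}\int_\Omega(|\na(\Phi-\Phi^D)|^2 + \ell^2|\Delta(\Phi-\Phi^D)|^2)\,dx$. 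The diffusion term is immediate: since $B_{ij} = D_i u_i^k\delta_{ij}$, one has $\int_\Omega \na v^k : B(w^k,\Phi^k)\na w^k\,dx = \int_\Omega\sum_i D_i u_i^k \na w_i^k\cdot\na v_i^k\,dx = \int_\Omega\sum_i D_i u_i^k \na w_i^k\cdot\na(w_i^k - w_i^D)\,dx$, and the higher-order term contributes exactly $\eps\|w^k - w^D\|_{H^m(\Omega)}^2$ since $v^k = w^k - w^D$.

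The next step is to absorb the cross term $-\int_\Omega\sum_i D_i u_i^k\na w_i^k\cdot\na w_i^D\,dx$ coming from the diffusion part. Using $0 \le u_i^k \le 1$ and Young's inequality,
\[
  \int_\Omega\sum_i D_i u_i^k\,\na w_i^k\cdot\na w_i^D\,dx
  \le \frac12\int_\Omega\sum_i D_i u_i^k|\na w_i^k|^2\,dx
  + \frac12\int_\Omega\sum_i D_i|\na w_i^D|^2\,dx,
\]
which is exactly why the factor $\tfrac12$ appears in front of the dissipation term and why the constant $\tfrac{\tau}{2}\int_\Omega\sum_i D_i|\na w_i^D|^2\,dx$ shows up on the right-hand side; this is where the assumption $w_i^D \in H^1(\Omega)$ (equivalently, $u_i^D \ge \eta > 0$ and $\log u_0^D \in L^2$, hence $\na w_i^D \in L^2$) enters. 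The reaction term $\int_\Omega r(u^k)\cdot(w^k - w^D)\,dx$ is bounded directly by Hypothesis (H5): one writes $w^k - w^D$ in terms of $\pa h/\pa u_i^k$ modulo boundary contributions — more precisely $w_i^k - w_i^D = \pa h/\pa u_i^k|_{\text{terms at }u^k} $ — and applies \eqref{1.H5} to get $\le \tau C_r(1 + H(u^k))$; strictly speaking one should check that the boundary shift $w^D$ versus $w^{D}$ cancels against the Dirichlet data in $\pa h/\pa u_i$, which is exactly the content of the identification $\pa h/\pa u_i = \log(u_i/u_0) - \log(u_i^D/u_0^D) + z_i(\Phi-\Phi^D) = w_i - w_i^D$.

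Collecting all terms, multiplying by $\tau$, and rearranging yields \eqref{2.dei}. I expect the main obstacle to be the rigorous version of the "discrete chain rule" that converts $\frac{1}{\tau}\int_\Omega(u^k - u^{k-1})\cdot(w^k - w^D)\,dx$ into $\frac{1}{\tau}(H(u^k) - H(u^{k-1}))$ together with the correct electric contribution. Because the entropy density $h$ depends on $\Phi$ nonlocally (through the Poisson--Fermi constraint) as well as on $u$, one cannot simply invoke pointwise convexity of a function of $u$ alone; one must split $h$ as $h_{\mathrm{int}}(u) + h_{\mathrm{el}}(\Phi)$, use convexity of $h_{\mathrm{int}}$ in $u$ for the term $\sum_i(u_i^k - u_i^{k-1})\log(u_i^k/u_i^D)$ (and note $\log(u_i/u_0) - \log(u_i^D/u_0^D) = \sum_{j}(\ldots)$ so that the $u_0$-contribution is accounted for via $u_0 = 1 - \sum_i u_i$), and then handle the potential-dependent part $\sum_i z_i(\Phi^k - \Phi^D)(u_i^k - u_i^{k-1})$ by feeding it into the tested Poisson--Fermi equation and using convexity of the quadratic electric energy — exactly the manipulation carried out in \cite[Lemma 7 and its proof]{GeJu18}. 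Once this identification is in place, the rest is Young's inequality and Hypothesis (H5), and no further difficulty arises.
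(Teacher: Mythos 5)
Your proposal follows essentially the same route as the paper: test \eqref{2.approx1} with $\tau v^k=\tau(w^k-w^D)$, convert the discrete time-increment term into $H(u^k)-H(u^{k-1})$ via convexity of the internal energy together with the (difference of the) Poisson--Fermi equations tested against $\Phi^k-\Phi^D$, absorb the $\na w^D$ cross term by Young's inequality with the factor $\tfrac12$, and invoke Hypothesis (H5) after identifying $w_i^k-w_i^D$ with $\pa h/\pa u_i$. The only cosmetic imprecision is the choice of test function for the electric part (one tests the difference of the equations at steps $k$ and $k-1$ with $\theta=\Phi^k-\Phi^D$ rather than testing a single equation with $\Phi^k-\Phi^{k-1}$), which does not affect the argument.
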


\begin{proof}
We choose $\phi=\tau v^k=\tau(w^k-w^D)\in X$ as a test function in \eqref{2.approx1}.
Using the generalized Poincar\'e inequality to estimate the $\eps$-regularization
and Hypothesis (H5) to estimate the reaction rates, we find that
\begin{align*}
  \int_\Omega(u^k-u^{k-1})\cdot(w^k-w^D)dx &+ \tau\int_\Omega\na(w^k-w^D):B(w^k,\Phi^k)
	\na w^k dx \\
	&+ \eps\tau C\|w^k-w^D\|_{H^m(\Omega)}^2 \le \tau C_r(1+H(u^k)).
\end{align*}
It follows from the convexity of the function $g(u)=\sum_{i=0}^n\int_{u_i^D}^{u_i}\log(s/u_i^D)ds$ 
and the Poisson--Fermi equation \eqref{1.pf} as in \cite[Section 2]{GeJu18} that
\begin{align*}
  \int_\Omega&(u^k-u^{k-1})\cdot(w^k-w^D)dx
	= \int_\Omega\sum_{i=1}^n(u_i^k-u_i^{k-1})
	\bigg(\log\frac{u_i^k}{u_0^k} - \log\frac{u_i^D}{u_0^D}\bigg)dx \\
	&\phantom{xx}{}+ \int_\Omega\sum_{i=1}^n z_i(u_i^k-u_i^{k-1})(\Phi^k-\Phi^D)dx \\
	&\ge \int_\Omega\big(g(u^k)-g(u^{k-1})\big)dx 
	+ \frac{\lambda^2}{2}\int_\Omega\big(\ell^2|\Delta(\Phi^k-\Phi^D)|^2 
	+ |\na(\Phi^k-\Phi^D)|^2\big)dx \\
	&\phantom{xx}{}- \frac{\lambda^2}{2}\int_\Omega\big(\ell^2|\Delta(\Phi^{k-1}-\Phi^D)|^2 
	+ |\na(\Phi^{k-1}-\Phi^D)|^2\big)dx
	= H(u^k)-H(u^{k-1}).
\end{align*}
Inserting the definition $B_{ij}(w^k,\Phi^k) = D_i u_i^k\delta_{ij}$, we infer from Young's inequality that
\begin{align*}
  \na(w^k-w^D):B(w^k,\Phi^k)\na w^k
	&= \sum_{i=1}^n D_i u_i^k\na(w_i^k-w_i^D)\cdot\na w_i^k \\
	&\ge \frac12\sum_{i=1}^n D_i u_i^k|\na w_i^k|^2 - \frac12 \sum_{i=1}^n D_i u_i^k|\na w_i^D|^2.
\end{align*}
Collecting these estimates and observing that $u_i^k\le 1$ concludes the proof.
\end{proof}

We sum \eqref{2.dei} over $k=1,\ldots,j$,
\begin{align*}
  (1-\tau C_r)H(u^j)
	&+ \frac{\tau}{2}\sum_{k=1}^j\int_\Omega\sum_{i=1}^n D_i u_i^k
	|\na w_i^k|^2 dx
	+ \eps\tau\sum_{k=1}^j\|w^k-w^D\|_{H^m(\Omega)}^2 \\
	&\le \tau C_r\sum_{k=1}^{j-1} H(u^k) + H(u^0) + j\tau C_r 
	+ \frac{\tau}{2}\sum_{k=1}^j\int_\Omega\sum_{i=1}^n D_i|\na w_i^D|^2 dx,
\end{align*}
and, assuming $\tau<1/C_r$, apply the discrete Gronwall inequality \cite{Cla87}:
\begin{align*}
  H(u^j) + \frac{\tau}{2}\bigg(\min_{i=1,\ldots,n}D_i\bigg)\sum_{k=1}^j
  \int_\Omega\sum_{i=1}^n u_i^k|\na w_i^k|^2 dx
  + \eps\tau\sum_{k=1}^j\|w^k-w^D\|_{H^m(\Omega)}^2 \le C(T),
\end{align*}
where $C(T)>0$ does not depend on $(\eps,\tau)$. We still need to
bound the second term on the left-hand side from below.

\begin{lemma}\label{lem.ineq}
It holds that 
\begin{align*}
  \sum_{k=1}^N\tau\int_\Omega\sum_{i=1}^n u_i^k|\na w_i^k|^2 dx 
  &\ge \frac12\sum_{k=1}^N\tau\int_\Omega\bigg(\sum_{i=1}^n
  |\na(u_i^k)^{1/2}|^2 + |\na\log u_0^k|^2 + |\na u_0^k|^2\bigg)dx \\
  &\phantom{xx}- C\sum_{k=1}^N\tau\int_\Omega|\na\Phi^k|^2dx,
\end{align*}
where $C>0$ depends on $(D_i)$ and $(z_i)$.
\end{lemma}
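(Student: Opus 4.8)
The plan is to prove the asserted inequality pointwise in $\Omega$ for each fixed index $k$, and then integrate over $\Omega$ and sum the resulting estimate over $k=1,\dots,N$ after multiplication by $\tau$. Recall first that, by \eqref{1.ui} and the higher-order regularization built into \eqref{2.approx1}, each $u_i^k=u_i(w^k,\Phi^k)$ is continuous on $\overline\Omega$ and bounded away from $0$ and from $1$ for $i=0,\dots,n$ (since $w^k\in H^m(\Omega;\R^n)\hookrightarrow L^\infty(\Omega;\R^n)$ and $\Phi^k\in H^2(\Omega)\hookrightarrow L^\infty(\Omega)$), and $u_i^k\in H^1(\Omega)$; hence $\na\log u_i^k=\na u_i^k/u_i^k\in L^2(\Omega)$ and all the chain-rule manipulations below are justified.

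First I would differentiate the definition \eqref{1.wi} of the entropy variables to obtain $\na w_i^k=\na\log u_i^k-\na\log u_0^k+z_i\na\Phi^k$. Applying the elementary inequality $|a+b|^2\ge\tfrac12|a|^2-|b|^2$ with $a=\na\log u_i^k-\na\log u_0^k$ and $b=z_i\na\Phi^k$ and multiplying by $u_i^k$ yields
\[
  u_i^k|\na w_i^k|^2 \ge \tfrac12\,u_i^k\big|\na\log u_i^k-\na\log u_0^k\big|^2 - z_i^2\,u_i^k|\na\Phi^k|^2 .
\]
Summing over $i$ and using $u_i^k\le 1$ together with $\sum_{i=1}^n u_i^k=1-u_0^k\le 1$, the contribution of the last terms is at most $(\max_i z_i^2)|\na\Phi^k|^2$, which will become the term $-C\int_\Omega|\na\Phi^k|^2\,dx$ in the final bound.

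Next I would expand the square and sum over $i$, using $u_i^k\na\log u_i^k=\na u_i^k$, $u_i^k|\na\log u_i^k|^2=|\na u_i^k|^2/u_i^k=4|\na(u_i^k)^{1/2}|^2$, and the saturation constraint $\sum_{i=1}^n\na u_i^k=-\na u_0^k$. The cross term becomes $-2\sum_{i=1}^n\na u_i^k\cdot\na\log u_0^k=2\na u_0^k\cdot\na\log u_0^k=2u_0^k|\na\log u_0^k|^2$, while $\sum_{i=1}^n u_i^k|\na\log u_0^k|^2=(1-u_0^k)|\na\log u_0^k|^2$, so that
\[
  \sum_{i=1}^n u_i^k\big|\na\log u_i^k-\na\log u_0^k\big|^2 = 4\sum_{i=1}^n|\na(u_i^k)^{1/2}|^2 + (1+u_0^k)|\na\log u_0^k|^2 .
\]
Since $0<u_0^k\le 1$ one has $|\na u_0^k|^2=(u_0^k)^2|\na\log u_0^k|^2\le u_0^k|\na\log u_0^k|^2$, whence $(1+u_0^k)|\na\log u_0^k|^2\ge|\na\log u_0^k|^2+|\na u_0^k|^2$. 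Combining this with the two previous displays gives the pointwise bound
\[
  \sum_{i=1}^n u_i^k|\na w_i^k|^2 \ge \tfrac12\Big(4\sum_{i=1}^n|\na(u_i^k)^{1/2}|^2 + |\na\log u_0^k|^2 + |\na u_0^k|^2\Big) - (\max_i z_i^2)|\na\Phi^k|^2 ,
\]
which is stronger than the claimed inequality since $4\ge1$. Integrating over $\Omega$, multiplying by $\tau$, and summing over $k=1,\dots,N$ concludes the argument.

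There is no essential obstacle here; the computation is elementary once the right sequence of steps is in place. The points that need care are the correct sign and algebra of the cross term when summing over the species (which is exactly where the constraint $\sum_{i=1}^n u_i^k=1-u_0^k$ is used), the estimate $|\na u_0^k|^2\le u_0^k|\na\log u_0^k|^2$ (which relies on $u_0^k\le1$), and the verification that all the differentiations are legitimate, which is where the strict positivity and $L^\infty$-bounds on $u_i^k$ furnished by the regularization enter.
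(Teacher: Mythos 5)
Your argument is correct and follows essentially the same route as the paper: split off the drift term $z_i\na\Phi^k$ by Young's inequality, expand $\sum_i u_i^k|\na\log(u_i^k/u_0^k)|^2$ using the saturation constraint $\sum_{i=1}^n\na u_i^k=-\na u_0^k$, and bound the resulting $(1+u_0^k)|\na\log u_0^k|^2$ (equivalently $|\na\log u_0^k|^2+|\na u_0^k|^2/u_0^k$) from below via $u_0^k\le 1$. The only additions are your explicit justification of the chain-rule steps via the strict positivity of $u_i^k$ at the approximate level, which the paper leaves implicit.
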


\begin{proof}
We infer from Young's inequality and the bound $u_i^k\le 1$ that
$$
  u_i^k|\na w_i^k|^2 = u_i^k\bigg|\na\log\frac{u_i^k}{u_0^k} + z_i\na\Phi^k\bigg|^2
  \ge \frac12u_i^k\bigg|\na\log\frac{u_i^k}{u_0^k}\bigg|^2
  - |z_i\na\Phi^k|^2.
$$
The first term on the right-hand side is rewritten as
\begin{align*}
  \frac12 u_i^k\bigg|\na\log\frac{u_i^k}{u_0^k}\bigg|^2
  &= \frac12\sum_{i=1}^n\frac{|\na u_i^k|^2}{u_i^k} 
  + \frac12\sum_{i=1}^n u_i^k|\na\log u_0^k|^2 
  - \sum_{i=1}^n\na u_i^k\cdot\na\log u_0^k \\
  &= \frac12\sum_{i=1}^n\frac{|\na u_i^k|^2}{u_i^k} 
  + \frac12(1-u_0^k)|\na\log u_0^k|^2 
  - \na(1-u_0^k)\cdot\na\log u_0^k \\
  &= \frac12\sum_{i=1}^n\frac{|\na u_i^k|^2}{u_i^k}
  + \frac12|\na\log u_0^k|^2 + \frac{|\na u_0^k|^2}{2u_0^k} \\
  &\ge 2\sum_{i=1}^n|\na(u_i^k)^{1/2}|^2 + \frac12|\na\log u_0^k|^2
  + \frac12|\na u_0^k|^2,
\end{align*}
using $u_0^k\le 1$ in the last step.
\end{proof}

Since the free energy is bounded from below, 
we conclude the following uniform bounds.

\begin{lemma}\label{lem.est}
There exists $C>0$ not depending on $(\eps,\tau)$ such that
for $i=1,\ldots,n$,
\begin{align*}
  \sum_{k=1}^N\tau\big(\|(u_i^k)^{1/2}\|_{H^1(\Omega)}^2
  + \|u_i^k\|_{H^1(\Omega)}^2
  + \|u_0^k\|_{H^1(\Omega)}^2 + \|\log u_0^k\|_{H^1(\Omega)}^2\big)
  &\le C, \\
  \eps\sum_{k=1}^N\tau\|w_i^k\|_{H^m(\Omega)}^2
  + \sum_{k=1}^N\tau\|\Phi^k\|_{H^2(\Omega)}^2 &\le C.
\end{align*}
\end{lemma}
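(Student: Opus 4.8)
The plan is to extract all bounds from the discrete Gronwall estimate obtained just above the lemma, which for $j=N$ reads
$$\sup_{1\le j\le N}H(u^j)+\tau\Big(\min_{i}D_i\Big)\sum_{k=1}^N\int_\Omega\sum_{i=1}^n u_i^k|\na w_i^k|^2\,dx+\eps\tau\sum_{k=1}^N\|w^k-w^D\|_{H^m(\Omega)}^2\le C(T)$$
with $C(T)$ independent of $(\eps,\tau)$. The last term already gives $\eps\sum_k\tau\|w_i^k\|_{H^m}^2\le C$, $w^D$ being a fixed $H^m$ function at this regularized level. For $\Phi^k$, observe that the internal-energy density $g$ in \eqref{1.H} is a sum of terms $\int_{u_i^D}^{u_i^k}\log(s/u_i^D)\,ds\ge0$, so $H(u^k)\ge\tfrac{\lambda^2}{2}\|\na(\Phi^k-\Phi^D)\|_{L^2}^2+\tfrac{(\lambda\ell)^2}{2}\|\Delta(\Phi^k-\Phi^D)\|_{L^2}^2$ is bounded uniformly in $k$; combining this with the generalized Poincaré inequality (applicable by (H1) since $\Phi^k-\Phi^D=0$ on $\Gamma_D$ and $\operatorname{meas}(\Gamma_D)>0$), $H^2$ elliptic regularity for the Poisson--Fermi operator in \eqref{2.approx2} with right-hand side bounded in $L^2$ (using $0\le u_i^k\le1$ and $f\in L^2$), and $\Phi^D\in H^2(\Omega)$, this yields $\|\Phi^k\|_{H^2}\le C$ uniformly, hence $\sum_k\tau\|\Phi^k\|_{H^2}^2\le TC^2=C$.

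Next I would apply Lemma \ref{lem.ineq} to the middle term of the Gronwall estimate and move its $\int_\Omega|\na\Phi^k|^2$ contribution to the left-hand side; since $\|\na\Phi^k\|_{L^2}\le\|\na(\Phi^k-\Phi^D)\|_{L^2}+\|\na\Phi^D\|_{L^2}\le C$ by the previous paragraph, this produces
$$\tau\sum_{k=1}^N\int_\Omega\Big(\sum_{i=1}^n|\na\sqrt{u_i^k}|^2+|\na\log u_0^k|^2+|\na u_0^k|^2\Big)dx\le C.$$
As $0\le u_i^k\le1$, we have $|\na u_i^k|=2\sqrt{u_i^k}\,|\na\sqrt{u_i^k}|\le2|\na\sqrt{u_i^k}|$, so $\tau\sum_k\|\na u_i^k\|_{L^2}^2\le C$ as well. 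The $L^2$ parts are then trivial for all but one term: the pointwise bounds $0\le u_i^k,u_0^k\le1$ give $\|\sqrt{u_i^k}\|_{L^2}^2=\|u_i^k\|_{L^1}\le|\Omega|$ and $\|u_i^k\|_{L^2}^2,\|u_0^k\|_{L^2}^2\le|\Omega|$.

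The only delicate point is the $L^2$ bound for $\log u_0^k$, which is singular exactly where the solvent vanishes and hence cannot be controlled by $\|u_0^k\|_{L^\infty}$. Here I would use that $u_i^k=u_i^D$ on $\Gamma_D$ for $i=1,\dots,n$, hence $u_0^k=u_0^D$ there, so that $\log u_0^k-\log u_0^D$ is a genuine $H^1(\Omega)$ function (at the regularized level $u_0^k$ and $u_0^D$ are bounded away from zero) vanishing on $\Gamma_D$; the generalized Poincaré inequality then gives $\|\log u_0^k\|_{L^2}\le\|\log u_0^D\|_{L^2}+C\|\na\log u_0^k\|_{L^2}+C\|\na\log u_0^D\|_{L^2}$, and squaring, multiplying by $\tau$, summing over $k$ and using the second displayed bound together with $\log u_0^D\in L^2(\Omega)$ from (H4) and $\na\log u_0^D=\na u_0^D/u_0^D\in L^2(\Omega)$ (valid because $u_0^D\ge\eta$ at this level) gives $\tau\sum_k\|\log u_0^k\|_{L^2}^2\le C$. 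This constant depends on $\eta$ through $\|\na\log u_0^D\|_{L^2}$ — at the true boundary-data level (H4) only guarantees $\log u_0^D\in L^2(\Omega)$ — which is harmless, since the de-regularization is carried out in the order $(\eps,\tau)\to0$ first and $\eta\to0$ afterwards, and the lemma only claims independence of $(\eps,\tau)$. Collecting the three groups of estimates proves the lemma.
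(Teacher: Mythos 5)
Your proof is correct and follows the same route as the paper: the gradient bounds come from the summed discrete free energy inequality combined with Lemma \ref{lem.ineq}, the bound $|\na u_i^k|\le 2\|(u_i^k)^{1/2}\|_{L^\infty}|\na(u_i^k)^{1/2}|$, elliptic regularity for the Poisson--Fermi equation with $L^2$-bounded right-hand side, and the Poincar\'e inequality for $\log u_0^k$ using the Dirichlet data. Your remark that the Poincar\'e step costs a constant depending on $\eta$ through $\na\log u_0^D$ --- harmless because the lemma only claims uniformity in $(\eps,\tau)$ and the limit $\eta\to0$ is taken last --- is a subtlety the paper's one-line justification glosses over, and you resolve it correctly.
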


\begin{proof}
The inequality 
$$
  \|\na u_i^k\|_{L^2(\Omega)} 
  \le 2\|(u_i^k)^{1/2}\|_{L^\infty(\Omega)}
  \|\na(u_i^k)^{1/2}\|_{L^2(\Omega)}\le 2\|\na(u_i^k)^{1/2}\|_{L^2(\Omega)}
$$ 
shows that $\sum_{k=1}^N\tau\|\na u_i^k\|_{L^2(\Omega)}^2\le C$. 
The $H^2(\Omega)$ bound for $\Phi^k$ follows immediately from the
Poisson--Fermi equation as its right-hand side is bounded in $L^2(\Omega)$.
The $H^1(\Omega)$ bound for $\log u_0^k$ is a consequence of the
$L^2(\Omega)$ bound for $\na\log u_0^k$ and the Poincar\'e inequality,
using the fact that $\log u_0^D\in L^2(\Omega)$ by Hypothesis (H4). 
\end{proof}

\subsection{Limit $(\eps,\tau)\to 0$}

We introduce the piecewise constant in time functions
$u_i^{(\tau)}(x,t) = u_i^k(x)$, $w_i^{(\tau)}(x,t) = w_i^k(x)$, and
$\Phi^{(\tau)}(x,t) = \Phi^k(x)$ for $x\in\Omega$, $t\in((k-1)\tau,k\tau]$. At time $t=0$, we set
$w^{(\tau)}(\cdot,0) = h'(u^0)$ and $u_i^{(\tau)}(\cdot,0) = u_i^0$.
Furthermore, we introduce the shift operator 
$(\sigma_\tau u^{(\tau)})(\cdot,t) = u^{k-1}$ for $t\in((k-1)\tau,k\tau]$. Then, summing \eqref{2.approx1}--\eqref{2.approx2} over $k=1,\ldots,N$, we see that $(u^{(\tau)},\Phi^{(\tau)})$ solves
\begin{align}\label{2.tau1}
  \frac{1}{\tau}&\int_0^T\int_\Omega(u^{(\tau)}-\sigma_\tau u^{(\tau)})
  \cdot\phi dxdt + \int_0^T\int_\Omega\na \phi:
  B(w^{(\tau)},\Phi^{(\tau)})\na w^{(\tau)} dxdt \\
  &\phantom{xx}{}+ \eps\int_0^T\int_\Omega\bigg(\sum_{|\alpha|=m}
  D^\alpha(w^{(\tau)}-w^D)\cdot D^\alpha\phi 
  + (w^{(\tau)}-w^D)\cdot\phi\bigg)dxdt \nonumber \\
  &= \int_0^T\int_\Omega r(u^{(\tau)})\cdot\phi dxdt, \nonumber \\
  \lambda^2&\int_0^T\int_\Omega\big(\ell^2\Delta\Phi^{(\tau)}\Delta\theta
  + \na\Phi^{(\tau)}\cdot\na\theta\big)dxdt
  = \int_0^T\int_\Omega\bigg(\sum_{i=1}^n z_iu_i^{(\tau)} 
  + f\bigg)\theta dxdt \label{2.tau2}
\end{align}
for piecewise constant in time functions $\phi:(0,T)\to X$ and
$\theta:(0,T)\to H_{D,N}^2(\Omega)$, recalling that $X=H^m(\Omega;\R^n)\cap H_D^1(\Omega;\R^n)$. Lemma \ref{lem.est} and the $L^\infty(\Omega)$ estimate of $u_i^k$ imply the uniform bounds
\begin{align}
  \|(u_i^{(\tau)})^{1/2}\|_{L^2(0,T;H^1(\Omega))}
  + \|u_i^{(\tau)}\|_{L^2(0,T;H^1(\Omega))}
  + \|u_i^{(\tau)}\|_{L^\infty(\Omega_T)} &\le C, \label{2.ui} \\
  \|u_0^{(\tau)}\|_{L^2(0,T;H^1(\Omega))}
  + \|u_0^{(\tau)}\|_{L^\infty(\Omega_T)}
  + \|\log u_0^{(\tau)}\|_{L^2(0,T;H^1(\Omega))} &\le C, 
  \label{2.u0} \\
  \sqrt{\eps}\|w_i^{(\tau)}\|_{L^2(0,T;H^m(\Omega))}
  + \|\Phi^{(\tau)}\|_{L^2(0,T;H^2(\Omega))} &\le C, \label{2.Phi}
\end{align}
where $i=1,\ldots,n$. 
We also need a uniform bound for the discrete time derivative.

\begin{lemma}\label{lem.time}
There exists a constant $C>0$ independent of $(\eps,\tau)$ such that
for all $i=1,\ldots,n$,
$$
  \tau^{-1}\|u_i^{(\tau)}-\sigma_\tau u_i^{(\tau)}\|_{L^2(0,T;X')}
  + \tau^{-1}\|u_0^{(\tau)}-\sigma_\tau u_0^{(\tau)}\|_{L^2(0,T;X')}
  \le C.
$$
\end{lemma}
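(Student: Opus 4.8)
The plan is to estimate the discrete time increment by duality, testing the discrete equation \eqref{2.tau1} against functions in $X$ and using the uniform bounds already collected in \eqref{2.ui}--\eqref{2.Phi}. First I would fix $i\in\{1,\ldots,n\}$ and a test function $\phi\in X$ with $\|\phi\|_X\le 1$ whose only nonzero component is the $i$th; then \eqref{2.tau1} reads
\begin{align*}
  \frac1\tau\int_0^T\int_\Omega(u_i^{(\tau)}-\sigma_\tau u_i^{(\tau)})\phi\,dxdt
  &= -\int_0^T\int_\Omega D_i u_i^{(\tau)}\na w_i^{(\tau)}\cdot\na\phi\,dxdt \\
  &\phantom{xx}{}- \eps\int_0^T\int_\Omega\Big(\sum_{|\alpha|=m}D^\alpha(w_i^{(\tau)}-w_i^D)D^\alpha\phi
  + (w_i^{(\tau)}-w_i^D)\phi\Big)dxdt \\
  &\phantom{xx}{}+ \int_0^T\int_\Omega r_i(u^{(\tau)})\phi\,dxdt.
\end{align*}
Each term on the right is bounded by $C\|\phi\|_X$ with $C$ independent of $(\eps,\tau)$: the first because $D_iu_i^{(\tau)}\na w_i^{(\tau)} = D_i(\na u_i^{(\tau)} - u_i^{(\tau)}\na\log u_0^{(\tau)} + u_i^{(\tau)}z_i\na\Phi^{(\tau)}) = -J_i^{(\tau)}$ is bounded in $L^2(\Omega_T)$ by \eqref{2.ui}, \eqref{2.u0}, \eqref{2.Phi} and $0\le u_i^{(\tau)}\le 1$; the second because, by Cauchy--Schwarz in time and the factor $\sqrt\eps$ in \eqref{2.Phi}, $\eps\|w_i^{(\tau)}-w_i^D\|_{L^2(0,T;H^m(\Omega))}\le\sqrt\eps\big(\sqrt\eps\|w_i^{(\tau)}\|_{L^2(0,T;H^m)}+\sqrt\eps\|w_i^D\|_{L^2(0,T;H^m)}\big)\le C\sqrt\eps\to 0$, using the embedding $H^m(\Omega)\hookrightarrow X$ for the duality pairing and $w_i^D\in H^1\cap L^\infty$; and the third because $r_i$ is continuous on the compact set $\overline\dom$, hence $r_i(u^{(\tau)})$ is bounded in $L^\infty(\Omega_T)$, and the embedding $X\hookrightarrow L^2(\Omega)$ controls $\|\phi\|_{L^2(\Omega_T)}$ by $\|\phi\|_{L^2(0,T;X)}$. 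Taking the supremum over such $\phi$ gives $\tau^{-1}\|u_i^{(\tau)}-\sigma_\tau u_i^{(\tau)}\|_{L^2(0,T;X')}\le C$.

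For the solvent component $u_0^{(\tau)} = 1 - \sum_{i=1}^n u_i^{(\tau)}$ I would simply sum: $u_0^{(\tau)}-\sigma_\tau u_0^{(\tau)} = -\sum_{i=1}^n(u_i^{(\tau)}-\sigma_\tau u_i^{(\tau)})$ (the constant $1$ cancels), so by the triangle inequality in $L^2(0,T;X')$,
$$
  \tau^{-1}\|u_0^{(\tau)}-\sigma_\tau u_0^{(\tau)}\|_{L^2(0,T;X')}
  \le \sum_{i=1}^n\tau^{-1}\|u_i^{(\tau)}-\sigma_\tau u_i^{(\tau)}\|_{L^2(0,T;X')}\le C,
$$
which finishes the lemma.

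The step I expect to require the most care is the bookkeeping on the $\eps$-regularization term: one must be sure the $m$th-order term is paired against $\phi\in X=H^m(\Omega;\R^n)\cap H^1_D(\Omega;\R^n)$ (not against a mere $H^1_D$ test function), so that $\|\phi\|_{H^m}$ is available, and that the single power of $\sqrt\eps$ left over after Cauchy--Schwarz genuinely tends to zero — which it does by \eqref{2.Phi}. Everything else is a routine application of Hölder's inequality and the already-established uniform bounds; no new ideas are needed beyond recognizing $-J_i^{(\tau)} = D_iu_i^{(\tau)}\na w_i^{(\tau)}$ and that its $L^2(\Omega_T)$-norm is controlled by \eqref{2.ui}--\eqref{2.Phi}.
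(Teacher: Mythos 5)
Your argument is essentially the paper's own proof: test the discrete weak formulation \eqref{2.tau1} by duality, identify the flux term with $-J_i^{(\tau)}$ bounded in $L^2(\Omega_T)$ via \eqref{2.ui}--\eqref{2.Phi}, absorb the $\eps$-regularization using the $\sqrt\eps$ bound, and obtain the $u_0$-estimate by summing over $i$. The only imprecision is that to control the $L^2(0,T;X')$ norm you must test against time-dependent $\phi\in L^2(0,T;X)$ (the paper uses piecewise constant in time functions followed by a density argument), not a fixed $\phi\in X$; your estimates already produce the factor $\|\phi\|_{L^2(0,T;H^m(\Omega))}$, so this is a matter of phrasing rather than substance.
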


\begin{proof}
Let $\phi:(0,T)\to X$ be piecewise constant. Since
\begin{align*}
  \int_0^T\int_\Omega&\na\phi: B(u^{(\tau)},\Phi^{(\tau)})\na w^{(\tau)}
  dxdt \\
  &= \sum_{i=1}^n D_i\int_0^T\int_\Omega\big(\na u_i^{(\tau)}
  - u_i^{(\tau)}\na\log u_0^{(\tau)} + z_i u_i^{(\tau)}\na\Phi^{(\tau)}
  \big)\cdot\na\phi_i dxdt,
\end{align*}
we find that
\begin{align*}
  \frac{1}{\tau}\bigg|&\int_0^T\int_\Omega(u_i^{(\tau)}-\sigma_\tau
  u_i^{(\tau)})\phi_i dxdt\bigg|
  \le \eps\|w_i^{(\tau)}-w_i^D\|_{L^2(0,T;H^m(\Omega))}
    \|\phi_i\|_{L^2(0,T;H^m(\Omega))} \\
  &\quad + C\big(\|\na u_i^{(\tau)}\|_{L^2(\Omega_T)} 
  + \|\na\log u_0^{(\tau)}\|_{L^2(\Omega_T)}  + \|\na\Phi^{(\tau)}\|_{L^2(\Omega_T)}\big)\|\na\phi_i\|_{L^2(\Omega_T)} \\
  &\phantom{xx}+ \|r_i(u^{(\tau)})\|_{L^2(\Omega_T)}\|\phi_i\|_{L^2(\Omega_T)} \\
  &\le C\|\phi_i\|_{L^2(0,T;H^m(\Omega))}.
\end{align*}
By a density argument, this inequality holds for all 
$\phi_i\in L^2(0,T;X)$, showing the desired bound for the discrete time derivative of $u_i^{(\tau)}$. Summing the bounds over $i=1,\ldots,n$ yields the bound for $u_0^{(\tau)}$. 
\end{proof}

Estimates \eqref{2.ui}--\eqref{2.u0} and Lemma \ref{lem.time}
allow us to apply the Aubin--Lions lemma in the version of
\cite{DrJu12} to conclude the existence of a subsequence, which is not relabeled, such that for $i=1,\ldots,n$, as $(\eps,\tau)\to 0$,
$$
  u_i^{(\tau)}\to u_i, \quad u_0^{(\tau)}\to u_0\quad  
  \mbox{strongly in }L^2(\Omega_T).
$$
In view of the uniform $L^\infty(\Omega_T)$ bound for $u_i^{(\tau)}$ and
$u_0^{(\tau)}$, these convergences hold in $L^p(\Omega_T)$ for all
$p<\infty$. Moreover, by \eqref{2.Phi} and Lemma \ref{lem.time}, 
up to a subsequence,
\begin{align*}
  \eps w_i^{(\tau)}\to 0 &\quad\mbox{strongly in }L^2(0,T;H^m(\Omega)),
  \\ 
  \Phi^{(\tau)}\rightharpoonup \Phi &\quad\mbox{weakly in }
  L^2(0,T;H^2(\Omega)), \\
  \tau^{-1}(u_i^{(\tau)}-\sigma_\tau u_i^{(\tau)})\rightharpoonup
  \pa_t u_i &\quad\mbox{weakly in }L^2(0,T;X'), \ i=1,\ldots,n.
\end{align*}

We claim that $\na\log u_0^{(\tau)}\rightharpoonup\na\log u_0$
weakly in $L^2(\Omega_T)$. It follows from \eqref{2.u0} that (for a
subsequence) $\na\log u_0^{(\tau)}\rightharpoonup v$ weakly in
$L^2(\Omega_T)$. We need to identify $v=\nabla \log u_0$.
We know that (again for a subsequence) $u_0^{(\tau)}\to u_0$
a.e.\ in $\Omega_T$. Therefore $\log u_0^{(\tau)}\to\log u_0$ a.e.\ in $\Omega_T$, since $u_0$ can vanish at most on a set of measure zero. The $L^2(\Omega_T)$ bound for $\log u_0^{(\tau)}$ shows that $\log u_0^{(\tau)}\to\log u_0$ strongly in $L^2(\Omega_T)$.
Hence, we conclude that $v=\na\log u_0$, proving the claim.

These convergences are sufficient to pass to the limit $(\eps,\tau)\to 0$
in \eqref{2.tau1}--\eqref{2.tau2} to find that $(u,\Phi)$ solves
\eqref{1.weak1}--\eqref{1.weak2} for smooth test functions. By a density argument, we may choose test functions from $L^2(0,T;H_D^1(\Omega))$ and $L^2(0,T;H^2_{D,N}(\Omega))$, respectively. The validity of the initial and Dirichlet boundary conditions is shown as in \cite{GeJu18}.
Estimates similar as in the proof of Lemma \ref{lem.time} 
(with $\eps=0$) show that $\pa_t u_i\in L^2(0,T;H^1_D(\Omega)')$
for $i=1,\ldots,n$. Then we conclude from $u_i\in L^2(0,T;H^1(\Omega))$
that $u_i\in C^0([0,T];L^2(\Omega))$. Thus, the initial datum is satisfied in the sense of $L^2(\Omega)$. 

It remains to verify the free energy inequality \eqref{1.fei}
under the assumptions $r_i(u)=0$ and $\log(u_i^D/u_0^D)+z_i\Phi^D=c_i\in\R$ for $i=1,\ldots,n$.
By definition of $w_i^D$, this implies that $\na w_i^D=0$. 
Then \eqref{2.dei} becomes
\begin{align*}
  H(u^k) - H(u^{k-1}) + \tau\int_\Omega\sum_{i=1}^n D_i u_i^k
  \bigg|\na\bigg(\log\frac{u_i^k}{u_0^k}+z_i\Phi^k\bigg)\bigg|^2 dx
  + \eps\tau\|w^k-w^D\|_{H^m(\Omega)}^2 \le 0.
\end{align*}
A summation over $k=j,\ldots,J$ gives
\begin{align}\label{2.feitau}
  H(u^{(\tau)}(t)) - H(u^{(\tau)}(s)) &+ \int_s^t\int_\Omega
  \sum_{i=1}^n D_i u_i^{(\tau)}\bigg|\na\bigg(\log
  \frac{u_i^{(\tau)}}{u_0^{(\tau)}}+z_i\Phi^{(\tau)}\bigg)\bigg|^2
  dxd\sigma \\
  &+ \eps\int_s^t\|w^{(\tau)}-w^D\|_{H^m(\Omega)}^2 d\sigma \le 0,
  \nonumber
\end{align}
where $s\in((j-1)\tau,j\tau]$ and $t\in((J-1)\tau,J\tau]$.
We wish to pass to the limit $(\eps,\tau)\to 0$ in this inequality.

The a.e.\ convergence of $u_i^{(\tau)}$ implies that
$H(u^{(\tau)}(t))\to H(u(t))$ for a.e.\ $t\in(0,T)$ and, since 
$u_i\in C^0([0,T];L^2(\Omega))$, this convergence holds in fact
for all $t\in[0,T]$. Moreover, $\eps(w^{(\tau)}-w^D)\to 0$ strongly $L^2(0,T;H^m(\Omega))$. 
It follows from the strong convergence of $u_i^{(\tau)}$ 
in $L^2(\Omega_T)$ that $(u_i^{(\tau)})^{1/2}\to\sqrt{u_i}$ strongly in
$L^4(\Omega_T)$. Hence, together with the weak convergence of 
$\na\Phi^{(\tau)}$ in $L^2(\Omega_T)$, we have
$$
  (u_i^{(\tau)})^{1/2}\na\Phi^{(\tau)}\rightharpoonup
  \sqrt{u_i}\na\Phi\quad\mbox{weakly in }L^{4/3}(\Omega_T).
$$
Furthermore, since $\na\log u_0^{(\tau)}\rightharpoonup\na\log u_0$
weakly in $L^2(\Omega_T)$,
\begin{align}\label{2.J}
  (u_i^{(\tau)})^{1/2}&\na\log\frac{u_i^{(\tau)}}{u_0^{(\tau)}}
  = 2\na(u_i^{(\tau)})^{1/2} - (u_i^{(\tau)})^{1/2}\na\log u_0^{(\tau)}
  \\
  &\rightharpoonup 2\na\sqrt{u_i} - \sqrt{u_i}\na\log u_0
  =: \sqrt{u_i}\na\log\frac{u_i}{u_0}\quad\mbox{weakly in }
  L^{4/3}(\Omega_T). \nonumber
\end{align}
On the other hand, the sequences $\na(u_i^{(\tau)})^{1/2}$
and $(u_i^{(\tau)})^{1/2}\na\log u_0^{(\tau)}$ are uniformly
bounded in $L^2(\Omega_T)$. Therefore, convergence \eqref{2.J} also
holds in $L^2(\Omega_T)$. Consequently, 
\begin{align*}
  \int_\Omega u_i\bigg|\na\bigg(\log\frac{u_i}{u_0}+z_i\Phi\bigg)
  \bigg|^2 dx
  &= \int_\Omega\big|2\na\sqrt{u_i} - \sqrt{u_i}\na\log u_0
  + \sqrt{u_i}z_i\na\Phi\big|^2 dx \\
  &\le \liminf_{(\eps,\tau)\to 0}
  \int_\Omega u_i^{(\tau)}\bigg|\na\bigg(\log
  \frac{u_i^{(\tau)}}{u_0^{(\tau)}} + z_i\Phi^{(\tau)}\bigg)
  \bigg|^2 dx.
\end{align*}
Then \eqref{1.fei} follows after passing to the limit inferior 
$(\eps,\tau)\to 0$ in
\eqref{2.feitau}, completing the proof of Theorem \ref{thm.ex}.

\begin{remark}\label{rem.feir}\rm
Let the reaction rates $r_i:\overline{\dom}\to\R$ be Lipschitz continuous and quasi-positive, i.e.\ $r_i(u)\ge 0$ for all $u\in\dom$ with $u_i=0$. We assume that the total reaction rate is nonnegative, i.e.\ $\sum_{i=1}^n r_i(u)\le 0$ for all $u\in\dom$, and that $r_i(u)\log u_i=0$ if $u_i=0$. We claim that the free energy inequality becomes
\begin{equation}
  H(u(t)) + \int_s^t\int_\Omega\sum_{i=1}^n D_iu_i|\na w_i|^2 
  dxd\sigma\le H(u(s)) + \int_s^t\int_\Omega\sum_{i=1}^n r_i(u)(w_i-w_i^D)dxd\sigma.
\end{equation}
This inequality follows from \eqref{2.feitau} after including the reaction rates and taking the limit $(\eps,\tau)$ in
\begin{align*}
  \int_s^t&\int_\Omega\sum_{i=1}^n r_i(u^{(\tau)})
  (w_i^{(\tau)}-w_i^D)dxd\sigma \\
  &= \int_s^t\int_\Omega\sum_{i=1}^n r_i(u^{(\tau)})
  \big(\log u_i^{(\tau)} - \log u_0^{(\tau)} + z_i\Phi^{(\tau)}
  - w_i^D\big)dxd\sigma.
\end{align*}

Indeed, the strong limit $u_i^{(\tau)}\to u_i$ in $L^2(\Omega_T)$ shows that $r_i(u^{(\tau)})w_i^D\to r_i(u)w_i^D$ strongly in $L^1(\Omega_T)$ as $(\eps,\tau)\to 0$. Moreover, since $\log u_0^{(\tau)}\to\log u_0$ strongly in $L^2(\Omega)$, we have $r_i(u^{(\tau)})\log u_0^{(\tau)}\to r_i(u)\log u_0$ strongly in $L^1(\Omega_T)$. It remains to show that $r_i(u^{(\tau)})\log u_i^{(\tau)}\to r_i(u)\log u_i$ strongly in $L^1(\Omega_T)$.
We have $r_i(u^{(\tau)})\log u_i^{(\tau)}\to r_i(u)\log u_i$ a.e.\ in $\Omega_T$ if $u_i>0$. If $u_i=0$, by assumption, we have $r_i(u)\log u_i=0$ and therefore $r_i(u^{(\tau)})\log u_i^{(\tau)}\to r_i(u)\log u_i$ a.e.\ in $\Omega_T$ as well. Moreover, $r_i(u)\log u_i$ is bounded. Hence, by dominated convergence, $r_i(u^{(\tau)})\log u_i^{(\tau)}\to r_i(u)\log u_i$ strongly in $L^1(\Omega_T)$, and the claim follows.
\qed\end{remark}


\section{Proof of Theorem \ref{thm.wsu}}\label{sec.wsu}

Let $(u,\Phi)$ be a weak solution and $(\bar{u},\bar\Phi)$ be a strong solution to \eqref{1.mass}--\eqref{1.bcphi}. In this section, we interpret $H(u)$ and $H(\bar{u})$ as functionals depending on $u=(u_0,\ldots,u_n)$ and $\bar{u}=(\bar{u}_0,\ldots,\bar{u}_n)$. This notation is only needed to determine the variational derivative of $H$ and will not lead to any confusion in the following computations.
We split the lengthy proof in several steps.

{\em Step 1: Calculation of the time derivative of 
$H(u,\Phi|\bar{u},\bar\Phi)$.} In the following, we write
\begin{align*}
  & H(u,\Phi|\bar{u},\bar{\Phi}) = H_1(u|\bar{u}) 
  + H_2(\Phi|\bar{\Phi}), \quad\mbox{where} \\
  & H_1(u|\bar{u}) = H_1(u) - H_1(\bar{u}) - H'_1(\bar{u})(u-\bar{u}), \\
  & H_2(\Phi|\bar{\Phi}) = H_2(\Phi) - H_2(\bar\Phi)
  - H'_2(\bar\Phi)(\Phi-\bar\Phi),
\end{align*}
where $H_1(u)=\int_\Omega h_1(u)dx$ with
$h_1(u)=\sum_{i=0}^n\int_{u_i^D}^{u_i}\log(s/u_i^D)ds$,
$H_2(\Phi)= \frac12\lambda^2\int_\Omega
(\ell^2|\Delta(\Phi-\Phi^D)|^2+|\na(\Phi-\Phi^D)|^2)dx$,
and $H'_1(\bar{u})(u-\bar{u})$ is the variational derivative of $H_1$ at $\bar{u}$ in the direction of $u-\bar{u}$ (similarly for $H'_2(\bar\Phi)(\Phi-\bar\Phi)$). 
We compute the time derivative of $H_1(u|\bar{u})$, split the sum over $i=0,\ldots,n$ into $i=0$ and the sum over $i=1,\ldots,n$, and insert $\pa_t u_0=-\sum_{i=1}^n\pa_t u_i$, 
$\pa_t \bar{u}_0=-\sum_{i=1}^n\pa_t \bar{u}_i$:
\begin{align*}
  \frac{dH_1}{dt}(u|\bar{u}) &= \frac{dH_1}{dt}(u) 
  - \frac{dH_1}{dt}(\bar{u}) - \frac{d}{dt}\int_\Omega\sum_{i=0}^n
  \frac{\pa h_1}{\pa u_i}(\bar{u})(u_i-\bar{u}_i)dx \\
  &= \frac{dH_1}{dt}(u) 
  - \sum_{i=0}^n\bigg(\bigg\langle\pa_t u_i,\frac{\pa h_1}{\pa u_i}(\bar{u})\bigg\rangle
  + \bigg\langle\pa_t\bar{u}_i,\frac{u_i}{\bar{u}_i}-1
  \bigg\rangle\bigg) \\
  &= \frac{dH_1}{dt}(u) - \sum_{i=1}^n\bigg(\bigg\langle\pa_t u_i,\frac{\pa h_1}{\pa u_i}(\bar{u})
  -\frac{\pa h_1}{\pa u_0}(\bar{u})\bigg\rangle
  + \bigg\langle\pa_t\bar{u}_i,\frac{u_i}{\bar{u}_i}
  -\frac{u_0}{\bar{u}_0}\bigg\rangle\bigg).
\end{align*}
Next, we insert equation \eqref{1.mass} for $u_i$ and $\bar{u}_i$ and use $(\pa h_1/\pa u_i)(\bar{u})=\log(\bar{u}_i/u_i^D)$:
\begin{align*}
  \frac{dH_1}{dt}(u|\bar{u}) &= \frac{dH_1}{dt}(u) 
  + \int_\Omega\sum_{i=1}^n D_iu_i\na w_i\cdot\na\bigg(\log
  \frac{\bar{u}_i}{\bar{u}_0} 
  - \log\frac{u_i^D}{u_0^D}dx\bigg)dx \\
  &\phantom{xx}+ \int_\Omega\sum_{i=1}^n D_i\bar{u}_i\na \bar{w}_i\cdot
  \bigg(\frac{u_i}{\bar{u}_i}\na\log\frac{u_i}{\bar{u}_i}
  - \frac{u_0}{\bar{u}_0}\na\log\frac{u_0}{\bar{u}_0}\bigg)dx.
\end{align*}
A similar computation for $H_2(\Phi|\bar\Phi)$ leads to
\begin{align*}
  \frac{dH_2}{dt}&(\Phi|\bar{\Phi}) 
  = \lambda^2 \big\langle
  (\ell^2\Delta-1)\Delta\pa_t(\Phi-\bar{\Phi}),\Phi-\bar{\Phi}\big\rangle
  = \sum_{i=1}^n\langle z_i\pa_t(u_i-\bar{u}_i),\Phi-\bar{\Phi}\rangle\\
  &= -\int_\Omega\sum_{i=1}^n D_iz_i(u_i\na w_i-\bar{u}_i\na\bar{w}_i)
  \cdot\na(\Phi-\bar{\Phi})dx \\
  &= \frac{dH_2}{dt}(\Phi) 
  + \int_\Omega\sum_{i=1}^n D_iz_i\big(u_i\na w_i\cdot\na\bar\Phi
  - u_i\na w_i\cdot\na\Phi^D
  + \bar{u}_i\na\bar{w}_i\cdot\na(\Phi-\bar\Phi)\big) dx,
\end{align*}
where we abbreviated $\bar{w}_i=\log(\bar{u}_i/\bar{u}_0)+z_i\bar\Phi$. As $u_i$ is only nonnegative, the expression $\na\log u_i$ may be not integrable. Therefore, we define $\na\log(u_i/u_0)
:= (2\na\sqrt{u_i}-\sqrt{u}_i\na\log u_0)/\sqrt{u_i}$ if
$u_i>0$ and $\na\log(u_i/u_0):=0$ else. This expression may be not integrable but $\sqrt{u_i}\na\log(u_i/u_0)$ lies in $L^2(\Omega_T)$ since $\na\log u_0\in L^2(\Omega_T)$. Thus, the expression $\sqrt{u_i}\na w_i=u_i\na\log(u_i/u_0)+u_iz_i\na\Phi\in L^2(\Omega_T)$ is well defined. In a similar way, we define $\na\log(u_i/\bar{u}_i)$, which is possible since $\bar{u}_i$ is strictly positive, and we have $\sqrt{u_i}\na\log(u_i/\bar{u}_i)\in L^2(\Omega_T)$.

We insert the free energy inequality \eqref{1.fei}, namely 
$$
  \frac{dH_1}{dt}(u) + \frac{dH_2}{dt}(\Phi)
  \le -\int_\Omega\sum_{i=1}^n D_iu_i|\na w_i|^2 dx,
$$
and rearrange the terms,
\begin{align}\label{3.Huu}
  \frac{dH}{dt}&(u,\Phi|\bar{u},\bar\Phi) = \frac{dH_1}{dt}(u|\bar{u})
  + \frac{dH_2}{dt}(\Phi|\bar\Phi) \\
  &\le -\int_\Omega\sum_{i=1}^n D_iu_i\na w_i\cdot\bigg(
  \na\log\frac{u_i}{\bar{u}_i} - \na\log\frac{u_0}{\bar{u}_0}
  + z_i\na(\Phi-\bar\Phi)\bigg)dx \nonumber \\
  &\phantom{xx}+ \int_\Omega\sum_{i=1}^n D_i\bar{u}_i\na\bar{w}_i\cdot\bigg(
  \frac{u_i}{\bar{u}_i}\na\log\frac{u_i}{\bar{u}_i}
  - \frac{u_0}{\bar{u}_0}\na\log\frac{u_0}{\bar{u}_0}
  + z_i\na(\Phi-\bar\Phi)\bigg)dx. \nonumber 
\end{align}
At this point, we observe that the terms involving $\na w_i^D$ cancel even if $\na w_i^D$ does not vanish, since they also appear in the free energy inequality \eqref{1.fei}.

The terms involving the solvent concentrations $u_0$ and $\bar{u}_0$
can be integrated into the sum over $i$ if we interpret system
\eqref{1.mass} as equations for $u_0,u_1,\ldots,u_n$. For this, we
observe that $u_0$ solves
$$
  \pa_t u_0 = -\diver\sum_{i=1}^n D_iu_i\na w_i
  = -\diver\bigg\{\sum_{i=1}^n D_iu_i\na\log\frac{u_i}{u_0}
  + \bigg(\sum_{i=1}^n D_iz_iu_i\bigg)\na\Phi\bigg\}.
$$
Then \eqref{1.mass} reads as
\begin{align}\label{3.AP}
  & \pa_t u_i = \diver\sum_{j=0}^n(A_{ij}\na\log u_j 
  + Q_{ij}\na\Phi), \quad\mbox{where }i=0,\ldots,n, \\
  & A = (A_{ij}) = \begin{pmatrix}
  \sum_{i=1}^n D_iu_i & -D_1u_1 & \cdots & -D_nu_n \\
  -D_1u_1 & D_1u_1 & & 0 \\
  \vdots & 0 & \ddots & 0 \\
  -D_nu_n & 0 & & D_nu_n \end{pmatrix}, \nonumber \\
  & Q = (Q_{ij}) = \begin{pmatrix}
  -\sum_{i=1}^n D_iz_iu_i & 0 & \cdots & 0 \\
  0 & D_1z_1u_1 & & 0 \\
  \vdots & 0 & \ddots & 0 \\
  0 & 0 & & D_nz_nu_n \end{pmatrix} \nonumber 
\end{align}
setting $z_0:=0$. We define in a similar way $\bar{A}$ and $\bar{Q}$. 
With this notation, \eqref{3.Huu} becomes
\begin{align*}
  \frac{dH}{dt}(u,\Phi|\bar{u},\bar\Phi) 
  &\le -\int_\Omega\sum_{i,j=0}^n(A_{ij}\na\log u_j+Q_{ij}\na\Phi)
  \cdot\bigg(\na\log\frac{u_i}{\bar{u}_i} + z_i\na(\Phi-\bar\Phi)
  \bigg)dx \\
  &\phantom{xx}{}+ \int_\Omega\sum_{i,j=0}^n(\bar{A}_{ij}
  \na\log\bar{u}_j+\bar{Q}_{ij}\na\bar\Phi)
  \cdot\bigg(\frac{u_i}{\bar{u}_i}\na\log\frac{u_i}{\bar{u}_i} + z_i\na(\Phi-\bar\Phi)\bigg)dx.
\end{align*}
We add and subtract some terms and integrate over $(0,t)$:
\begin{align}\label{3.H}
  & H((u,\Phi)(t)|(\bar{u},\bar\Phi)(t)) 
  - H((u,\Phi)(0)|(\bar{u},\bar\Phi)(0))
  \le I_1+I_2+I_3, \quad\mbox{where} \\
  & I_1 = -\int_0^t\int_\Omega\sum_{i,j=0}^n
  \bigg(A_{ij}\na\log\frac{u_j}{\bar{u}_j}
  + Q_{ij}\na(\Phi-\bar\Phi)\bigg)\cdot
  \bigg(\na\log\frac{u_i}{\bar{u}_i} + z_i\na(\Phi-\bar\Phi)
  \bigg)dxds, \nonumber \\
  & I_2 = -\int_0^t\int_\Omega\sum_{i,j=0}^n
  u_i\bigg\{\bigg(\frac{A_{ij}}{u_i}
  - \frac{\bar{A}_{ij}}{\bar{u}_i}\bigg)\na\log\bar{u}_j
  + \bigg(\frac{Q_{ij}}{u_i}-\frac{\bar{Q}_{ij}}{\bar{u}_i}\bigg)
  \na\bar\Phi\bigg\}\cdot\na\log\frac{u_i}{\bar{u}_i}dxds, \nonumber \\
  & I_3 = -\int_0^t\int_\Omega\sum_{i,j=0}^n\big((A_{ij}-\bar{A}_{ij})
  \na\log\bar{u}_j + (Q_{ij}-\bar{Q}_{ij})\na\bar\Phi\big)
  \cdot z_i\na(\Phi-\bar\Phi)dxds. \nonumber 
\end{align}
Observe that $u(0)=\bar{u}(0)$, implying that
$H((u,\Phi)(0)|(\bar{u},\bar\Phi)(0))=0$.

{\em Step 2: Estimation of $I_3$.}
By Young's inequality, we have
\begin{equation}\label{3.I3}
  I_3\le C\int_0^t\sum_{i=1}^n\big(\|u_i-\bar{u}_i\|_{L^2(\Omega)}^2
  + \|\na(\Phi-\bar\Phi)\|_{L^2(\Omega)}^2\big)ds,
\end{equation}
where $C>0$ depends on the $L^\infty(\Omega_T)$ norms of 
$\na\log\bar{u}_j$ and $\na\bar\Phi$. 

The treatment of $I_1$ and $I_2$ is more delicate.

{\em Step 3: Estimation of $I_1$.} We write $I_1=I_{11}+I_{12}+I_{13}$,
where
\begin{align*}
  I_{11} &= -\int_0^t\int_\Omega\sum_{i,j=0}^n A_{ij}
  \na\log\frac{u_j}{\bar{u}_j}\cdot\na\log\frac{u_i}{\bar{u}_i}dxds, \\
  I_{12} &= -\int_0^t\int_\Omega\sum_{i,j=0}^n z_i Q_{ij}
  |\na(\Phi-\bar\Phi)|^2dxds, \\
  I_{13} &= -\int_0^t\int_\Omega\sum_{i,j=0}^n z_i A_{ij}
  \na\log\frac{u_j}{\bar{u}_j}\cdot\na(\Phi-\bar\Phi)dxds \\
  &\phantom{xx}
  -\int_0^t\int_\Omega\sum_{i,j=0}^n Q_{ij}\na\log\frac{u_i}{\bar{u}_i}
  \cdot\na(\Phi-\bar\Phi)dxds.
\end{align*}
It follows from $0\le u_i\le 1$ that $|Q_{ij}|\le C$ and consequently
$$
  I_{12} \le C\int_0^t\|\na(\Phi-\bar\Phi)\|_{L^2(\Omega)}^2 ds.
$$
The matrix $A$ is not positive definite since $u_i=0$ is possible. We show that the matrix $G$, defined by $G_{ij}=A_{ij}/\sqrt{u_iu_j}$, is positive definite on the subspace $L=\{Y\in\R^{n+1}:
\sum_{i=0}^n\sqrt{u_i}Y_i=0\}$. The corresponding positive bound will help us to estimate $I_{13}$ and $I_2$. We introduce the projections
$$
  (P_LY)_i = Y_i - \sqrt{u_i}\sum_{j=0}^n\sqrt{u_j}Y_j, \quad 
  (P_{L^\perp}Y)_i =  \sqrt{u_i}\sum_{j=0}^n\sqrt{u_j}Y_j,
$$
for all $i=0,\ldots,n$ and $Y\in\R^{n+1}$. 

\begin{lemma}
Let $Y_i=\sqrt{u_i}\na\log(u_i/\bar{u}_i)\in L^2(\Omega_T)$ for $i=0,\ldots,n$. Then
$$
  I_{11} \le -D_*\int_0^t\int_\Omega\bigg(\frac{|(P_LY)_0|^2}{u_0}
  + \sum_{i=1}^n|(P_LY)_i|^2\bigg)dxds,
$$
where $D_*=\min_{i=1,\ldots,n}D_i>0$.
\end{lemma}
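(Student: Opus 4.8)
The plan is to rewrite $I_{11}$ through the matrix $G=(G_{ij})$ with $G_{ij}=A_{ij}/\sqrt{u_iu_j}$ and to reduce the estimate to a pointwise inequality in $\R^{n+1}$. The sum-of-squares structure of $A$ in \eqref{3.AP}, namely $\sum_{i,j=0}^nA_{ij}\xi_i\xi_j=\sum_{i=1}^nD_iu_i(\xi_i-\xi_0)^2$, combined with $\sqrt{u_i}\na\log(u_i/\bar u_i)=Y_i$ and $\sqrt{u_i}\na\log(u_0/\bar u_0)=\sqrt{u_i/u_0}\,Y_0$ (valid a.e., since $u_0>0$ a.e.\ in $\Omega_T$), gives
\begin{equation*}
  I_{11} = -\int_0^t\int_\Omega\sum_{i,j=0}^n G_{ij}Y_iY_j\,dx\,ds,
  \qquad
  \sum_{i,j=0}^n G_{ij}\xi_i\xi_j = \sum_{i=1}^n D_i\Big(\sqrt{u_i/u_0}\,\xi_0-\xi_i\Big)^2
\end{equation*}
for every $\xi\in\R^{n+1}$. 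In particular $G$ is symmetric and positive semidefinite, and the lemma reduces to the pointwise bound
\begin{equation*}
  \sum_{i,j=0}^n G_{ij}\xi_i\xi_j
  \ge D_*\Big(\frac{|(P_L\xi)_0|^2}{u_0}+\sum_{i=1}^n|(P_L\xi)_i|^2\Big)
  \qquad\text{a.e.\ in }\Omega_T,
\end{equation*}
which is then integrated over $\Omega\times(0,t)$; the assumption $Y_i\in L^2(\Omega_T)$ ensures that both sides are finite.

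To prove the pointwise bound I would first note that $N:=(\sqrt{u_0},\dots,\sqrt{u_n})$ spans $\ker G$ (the right-hand side of the sum-of-squares formula vanishes at $\xi=N$) and that $L^\perp=\operatorname{span}(N)$, $|N|^2=\sum_iu_i=1$; hence, since $GN=0$, $\sum_{i,j}G_{ij}\xi_i\xi_j=\sum_{i,j}G_{ij}(P_L\xi)_i(P_L\xi)_j$, so it suffices to treat $Z:=P_L\xi\in L$, i.e.\ $\sqrt{u_0}Z_0+\sum_{i=1}^n\sqrt{u_i}Z_i=0$. Introducing $W_i:=\sqrt{u_i/u_0}\,Z_0-Z_i$ for $i\ge1$ and $S:=\sum_{i=1}^n\sqrt{u_i}W_i$, the constraint first gives $Z_0/\sqrt{u_0}=S$, whence $Z_i=\sqrt{u_i}\,S-W_i$, and a short expansion yields the exact identity
\begin{equation*}
  \frac{Z_0^2}{u_0}+\sum_{i=1}^n Z_i^2 = \sum_{i=1}^n W_i^2 - u_0 S^2 \le \sum_{i=1}^n W_i^2 .
\end{equation*}
Since $\sum_{i,j}G_{ij}Z_iZ_j=\sum_{i=1}^n D_iW_i^2\ge D_*\sum_{i=1}^n W_i^2$, the pointwise bound follows at once, and integration over $\Omega\times(0,t)$ concludes the proof.

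The main obstacle is the degeneracy of the system: $u_i$ and $u_0$ may vanish, so $\na\log u_i$ need not be integrable and $G_{00}\sim 1/u_0$ is unbounded; therefore one must work throughout with the quantities that genuinely belong to $L^2(\Omega_T)$, namely $Y_i=\sqrt{u_i}\na\log(u_i/\bar u_i)$ and $(P_LY)_0/\sqrt{u_0}=\na\log(u_0/\bar u_0)-\sum_j\sqrt{u_j}Y_j$ — the latter lying in $L^2$ because $\na\log u_0\in L^2$ while $\na\log\bar u_0\in L^\infty$ by the regularity of the strong solution — and use that $u_0>0$ a.e. The second delicate point is that the constant is exactly $D_*$: this relies on the exact algebraic identity above, whereas a crude Cauchy--Schwarz bound for $Z_0^2/u_0+\sum_i Z_i^2$ in terms of $\sum_i W_i^2$ would cost a dimension-dependent factor. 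Once these two points are in place, the remaining steps are routine.
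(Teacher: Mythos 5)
Your proposal is correct and follows essentially the same route as the paper: both reduce $I_{11}$ to the quadratic form of $G_{ij}=A_{ij}/\sqrt{u_iu_j}$, restrict to the subspace $L$ via the projection $P_L$ (using $\ker G=L^\perp$), and establish the pointwise bound with the sharp constant $D_*$. The only difference is bookkeeping: the paper compares $G$ with the matrix $G_*$ obtained by replacing every $D_i$ by $D_*$ and evaluates $z^TG_*z$ under the constraint $\sum_i\sqrt{u_i}z_i=0$, whereas you use the sum-of-squares identity $\sum_{i,j}G_{ij}Z_iZ_j=\sum_{i=1}^nD_iW_i^2$ together with the exact relation $\sum_{i=1}^nW_i^2-u_0S^2=Z_0^2/u_0+\sum_{i=1}^nZ_i^2$ — both computations are correct and yield the same estimate.
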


\begin{proof}
Let $G_{ij}=A_{ij}/\sqrt{u_iu_j}$ if $u_iu_j>0$ and $G_{ij}=0$ else.
Recall that by definition, $\na\log(u_i/\bar{u}_i)=(2\na\sqrt{u_i}
-\sqrt{u_i}\na\log\bar{u}_i)/\sqrt{u_i}=Y_i/\sqrt{u_i}$ 
if $u_i>0$. In this case,
$$
  A_{ij}\na\log\frac{u_i}{\bar{u}_i}\cdot\na\log\frac{u_j}{\bar{u}_j}
  = G_{ij} Y_i Y_j.
$$
If $u_i=0$ or $u_j=0$, either $Y_i=0$ or $Y_j=0$ and hence,
the previous expression vanishes. Therefore,
$$
  I_{11} = -\int_0^t\int_\Omega\sum_{i,j=0}^n G_{ij}Y_iY_j dxds.
$$

We know that $u_0>0$ a.e.\ in $\Omega_T$. A straightforward computation shows that $\operatorname{ran}G=L$,
implying that $\operatorname{ker}G=L^\perp$. 
We claim that the matrix $G$ satisfies for all $Y\in\R^{n+1}$,
\begin{equation}\label{3.PGP}
  (P_LY)^T G(P_LY) \ge \frac{|(P_LY)_0|^2}{u_0}
  + \sum_{i=1}^n |(P_LY)_i|^2.
\end{equation}
Indeed, introduce first the matrix 
$$
  G_* = D_*\begin{pmatrix}
  u_0^{-1}\sum_{i=1}^n u_i & -\sqrt{u_1/u_0} & \cdots & -\sqrt{u_n/u_0}\\
  -\sqrt{u_1/u_0} & 1 & & 0 \\
  \vdots & 0 & \ddots & 0 \\
  -\sqrt{u_n/u_0} & 0 & & 1 \end{pmatrix}.
$$
Let $Y\in\R^{n+1}$ and $z_i:=(P_LY)_i=Y_i-\sqrt{u_i}\sum_{j=0}^n
\sqrt{u_j}Y_j$. Then, because of $\sum_{i=0}^n\sqrt{u_i}z_i=0$,
\begin{align*}
  z^TG_*z &= D_*\frac{z_0^2}{u_0}\sum_{i=1}^n u_i
  - 2D_*\frac{z_0}{\sqrt{u_0}}\sum_{i=1}^n\sqrt{u_i}z_i
  + D_*\sum_{i=1}^n z_i^2 \\
  &= D_*\frac{z_0^2}{u_0}(1-u_0) + 2D_*\frac{z_0}{\sqrt{u_0}}
  \sqrt{u_0}z_0 + D_*\sum_{i=1}^n z_i^2 \\
  &= D_*\bigg\{\bigg(\frac{1}{u_0}+1\bigg)z_0^2
  + \sum_{i=1}^n z_i^2\bigg\}
  \ge D_*\bigg(\frac{|(P_LY)_0|^2}{u_0} + \sum_{i=1}^n|(P_LY)_i|^2\bigg).
\end{align*}
This implies that
\begin{align*}
  z^T(G-G_*)z &= \frac{z_0^2}{u_0}\sum_{i=1}^n(D_i-D_*)u_i
  -2\frac{z_0}{\sqrt{u_0}}\sum_{i=1}^n(D_i-D_*)\sqrt{u_i}z_i
  + \sum_{i=1}^n(D_i-D_*)z_i^2 \\
  &= \sum_{i=1}^n(D_i-D_*)\bigg(\frac{z_0}{\sqrt{u_0}}\sqrt{u_i}
  + z_i\bigg)^2 \ge 0,
\end{align*}
and we infer that $(P_LY)^TG(P_LY)\ge (P_LY)^TG_*(P_LY)$, proving
claim \eqref{3.PGP}. 

We choose now $Y_i=\sqrt{u_i}\na\log(u_i/\bar{u}_i)$. The expression
$$
  \frac{|(P_LY)_0|^2}{u_0} = \bigg|\na\log\frac{u_0}{\bar{u}_0}
  - \sum_{j=0}^n\sqrt{u_j}Y_j\bigg|^2
$$
is integrable in $\Omega_T$ since $\na\log u_0\in L^2(\Omega_T)$,
and $\sqrt{u_j}Y_j\in L^2(\Omega_T)$. 
Therefore, we can integrate inequality \eqref{3.PGP}, and 
in view of $GY = G(P_LY)$ 
(since $\operatorname{ker}G=L^\perp$), this shows that
\begin{align*}
  I_{11} &= -\int_0^t\int_\Omega Y^TGY dxds
  = -\int_0^t\int_\Omega (P_LY)^T G(P_LY)dxds \\
  &\le -D_*\int_0^t\int_\Omega \bigg(\frac{|(P_LY)_0|^2}{u_0}
  + \sum_{i=1}^n|(P_LY)_i|^2\bigg)dxds,
\end{align*}
which finishes the proof.
\end{proof}

\begin{lemma}
Let $Y_i=\sqrt{u_i}\na\log(u_i/\bar{u}_i$) for $i=0,\ldots,n$. For any $\eps>0$, there exists $C(\eps)>0$ such that
$$
  I_{13} \le \eps\int_0^t\int_\Omega\bigg(\frac{|(P_LY)_0|^2}{u_0}
  + \sum_{i=1}^n|(P_LY)_i|^2\bigg)dxds
  + C(\eps)\int_0^t\|\na(\Phi-\bar\Phi)\|_{L^2(\Omega)}^2 ds.
$$
\end{lemma}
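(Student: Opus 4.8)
The plan is to split $I_{13}=I_{13}^{(1)}+I_{13}^{(2)}$ into its two defining lines and, in each summand, to replace the vector $Y=(Y_0,\ldots,Y_n)$ by its orthogonal decomposition $Y=P_LY+P_{L^\perp}Y$, so that only the ``good'' part $P_LY$ -- the one controlled by $I_{11}$ in the previous lemma -- survives and can be absorbed via Young's inequality against $\na(\Phi-\bar\Phi)$.

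For the first summand $I_{13}^{(1)}=-\int_0^t\int_\Omega\sum_{i,j=0}^n z_iA_{ij}\na\log(u_j/\bar u_j)\cdot\na(\Phi-\bar\Phi)\,dxds$, I would use the identity $A_{ij}=\sqrt{u_iu_j}\,G_{ij}$ together with $\ker G=L^\perp$ (established above), which gives $\sum_{j=0}^n A_{ij}\na\log(u_j/\bar u_j)=\sqrt{u_i}(GY)_i=\sqrt{u_i}(GP_LY)_i$. Reading off the rows of $G$ yields, for $i\ge1$ (the row $i=0$ does not contribute since $z_0=0$),
\[
  \sum_{j=0}^n A_{ij}\na\log\frac{u_j}{\bar u_j}
  = -D_i\frac{u_i}{\sqrt{u_0}}(P_LY)_0 + D_i\sqrt{u_i}(P_LY)_i .
\]
Since $0\le u_i\le1$, one has $u_i/\sqrt{u_0}\le\sqrt{u_i/u_0}$ and $\sqrt{u_i}\le1$, and $|z_iD_i|\le C$; hence Young's inequality with a small parameter $\delta>0$ bounds $I_{13}^{(1)}$ by $\delta\int_0^t\int_\Omega\big(u_0^{-1}|(P_LY)_0|^2+\sum_{i=1}^n|(P_LY)_i|^2\big)dxds+C(\delta)\int_0^t\|\na(\Phi-\bar\Phi)\|_{L^2(\Omega)}^2\,ds$.

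For the second summand $I_{13}^{(2)}=-\int_0^t\int_\Omega\sum_{i,j=0}^n Q_{ij}\na\log(u_i/\bar u_i)\cdot\na(\Phi-\bar\Phi)\,dxds$, since $Q$ is diagonal this equals $-\int_0^t\int_\Omega\sum_{i=0}^n Q_{ii}\na\log(u_i/\bar u_i)\cdot\na(\Phi-\bar\Phi)\,dxds$. Writing $\na\log(u_i/\bar u_i)=Y_i/\sqrt{u_i}$ on $\{u_i>0\}$ (both sides vanish on $\{u_i=0\}$) and inserting $Y_i=(P_LY)_i+\sqrt{u_i}\sum_{j=0}^n\sqrt{u_j}Y_j$, the $P_{L^\perp}$-contribution carries the prefactor $\sum_{i=0}^n(Q_{ii}/\sqrt{u_i})\sqrt{u_i}=\sum_{i=0}^n Q_{ii}=Q_{00}+\sum_{i=1}^n D_iz_iu_i=0$, so it cancels identically, leaving
\[
  \sum_{i=0}^n Q_{ii}\na\log\frac{u_i}{\bar u_i}
  = \frac{Q_{00}}{\sqrt{u_0}}(P_LY)_0 + \sum_{i=1}^n D_iz_i\sqrt{u_i}(P_LY)_i .
\]
Using $|Q_{00}|=\big|\sum_{k=1}^n D_kz_ku_k\big|\le C$ and $\sqrt{u_i}\le1$, Young's inequality gives the same type of bound for $I_{13}^{(2)}$. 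Adding the two estimates and choosing $\delta$ small enough that the total coefficient of the $P_LY$-terms equals $\eps$ proves the lemma. As in the previous lemma, I would remark that $u_0^{-1}|(P_LY)_0|^2=|\na\log(u_0/\bar u_0)-\sum_{j=0}^n\sqrt{u_j}Y_j|^2$ is integrable on $\Omega_T$ because $\na\log u_0\in L^2(\Omega_T)$ and $\sqrt{u_j}Y_j\in L^2(\Omega_T)$, so all integrals involved are well defined.

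The step I expect to be the genuine obstacle is the $Q$-term: unlike $A$, the matrix $Q$ is not of the form $\sqrt{u_iu_j}\,\widetilde G$ with $\ker\widetilde G=L^\perp$, so one cannot simply project away its component on $L^\perp$. A naive expansion of $Y_i$ produces a $P_{L^\perp}$-term with factor $\sqrt{u_i}\sum_j\sqrt{u_j}Y_j=-\sqrt{u_i}\sum_j(u_j-\bar u_j)\na\log\bar u_j$ (using $\sum_j\na u_j=\sum_j\na\bar u_j=0$), which is merely bounded in $L^\infty$ -- not small -- and whose time integral would leave an uncontrolled additive constant on the right-hand side. The resolution is the algebraic identity $\sum_{i=0}^n Q_{ii}=0$, a consequence of the conservation structure of the extended system $U=(u_0,\ldots,u_n)$, which makes this dangerous term vanish exactly; recognizing and exploiting this cancellation is the heart of the argument.
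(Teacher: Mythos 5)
Your proof is correct and follows essentially the same strategy as the paper: decompose $Y=P_LY+P_{L^\perp}Y$, verify that the $L^\perp$-component contributes nothing, and conclude with Young's inequality. The only (cosmetic) difference is organizational: the paper first observes that the $A$- and $Q$-lines of $I_{13}$ are identical and cancels the $L^\perp$-part of the combined expression $u_iY_0/\sqrt{u_0}-\sqrt{u_i}Y_i$, whereas you treat the two lines separately via $GP_{L^\perp}Y=0$ and the trace identity $\sum_{i=0}^nQ_{ii}=0$ — algebraically equivalent routes to the same cancellation.
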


\begin{proof}
We take into account the structures of the matrices $A$ and $Q$:
\begin{align*}
  I_{13} &= -\int_0^t\int_\Omega\sum_{i=1}^n z_i\bigg(A_{i0}
  \na\log\frac{u_0}{\bar{u}_0} + A_{ii}\na\log\frac{u_i}{\bar{u}_i}\bigg)
  \cdot\na(\Phi-\bar\Phi)dxds \\
  &\phantom{xx}
  - \int_0^t\int_\Omega\bigg(Q_{00}\na\log\frac{u_0}{\bar{u}_0}
  + \sum_{i=1}^n Q_{ii}\na\log\frac{u_i}{\bar{u}_i}\bigg)
  \cdot\na(\Phi-\bar\Phi)dxds. 
\end{align*}
Since $Q_{00}=-\sum_{i=1}^n D_iz_iu_i$ and $Q_{ii}=D_iz_iu_i$, 
we have
$$
  Q_{00}\na\log\frac{u_0}{\bar{u}_0}
  + \sum_{i=1}^n Q_{ii}\na\log\frac{u_i}{\bar{u}_i}
  = -\sum_{i=1}^n D_iz_iu_i\na\bigg(\log\frac{u_0}{\bar{u}_0}
  - \log\frac{u_i}{\bar{u}_i}\bigg)dx.  
$$
Furthermore, because of $A_{i0}=-D_iu_i$ and 
$A_{ii}=D_iu_i$,
$$
  \sum_{i=1}^n z_i \bigg(A_{i0}\na\log\frac{u_0}{\bar{u}_0} 
  + A_{ii}\na\log\frac{u_i}{\bar{u}_i}\bigg)
  = -\sum_{i=1}^n D_iz_iu_i\na\bigg(\log\frac{u_0}{\bar{u}_0}
  - \log\frac{u_i}{\bar{u}_i}\bigg)dx.
$$
This gives
\begin{align*}
  I_{13} &= 2\int_0^t\int_\Omega\sum_{i=1}^n D_iz_iu_i\na\bigg(
  \log\frac{u_0}{\bar{u}_0} - \log\frac{u_i}{\bar{u}_i}\bigg)
  \cdot\na(\Phi-\bar\Phi)dxds \\
  &= 2\int_0^t\int_\Omega\sum_{i=1}^n D_iz_i\bigg(
  u_i\frac{Y_0}{\sqrt{u_0}} - \sqrt{u_i}Y_i\bigg)
  \cdot\na(\Phi-\bar\Phi)dxds.
\end{align*}

Next, we calculate for $i=0,\ldots,n$,
\begin{align}\label{3.PLperp}
  (P_{L^\perp}Y)_i &= \sqrt{u_i}\sum_{j=0}^n u_j
  \na\log\frac{u_j}{\bar{u}_j}
  = \sqrt{u_i}\sum_{j=0}^n(\na u_j - u_j\na\log\bar{u}_j) \\
  &= -\sqrt{u_i}\sum_{j=0}^n u_j\na\log\bar{u}_j
  = \sqrt{u_i}\sum_{j=0}^n(\bar{u}_j-u_j)\na\log\bar{u}_j, \nonumber
\end{align}
where we used $\sum_{j=0}^n\na u_j=0$ and $\sum_{j=0}^n\bar{u}_j\na\log\bar{u}_j=0$. Hence, 
\begin{align*}
  u_i\frac{(P_{L^\perp}Y)_0}{\sqrt{u_0}} - \sqrt{u_i}(P_{L^\perp}Y)_i
  &= \frac{u_i}{\sqrt{u_0}}\bigg(\sqrt{u_0}
  \sum_{j=0}^n(\bar{u}_j-u_j)\na\log\bar{u}_j\bigg) \\
  &\phantom{xx}- \sqrt{u_i}\bigg(\sqrt{u_i}\sum_{j=0}^n(\bar{u}_j-u_j)
  \na\log\bar{u}_j\bigg) = 0. 
\end{align*}
We split $Y_i=(P_LY)_i + (P_{L^\perp}Y)_i$ in $I_{13}$, which leads to
$$
  I_{13} = 2\int_0^t\int_\Omega\sum_{i=1}^n D_iz_i
  \bigg(u_i\frac{(P_LY)_0}{\sqrt{u_0}} - \sqrt{u_i}(P_LY)_i\bigg)
  \cdot\na(\Phi-\bar\Phi)dxds.
$$
An application of Young's lemma finishes the proof.
\end{proof}

The previous lemmas show that
\begin{equation}\label{3.I1}
  I_1 \le (\eps-D_*)\int_0^t\int_\Omega\bigg(\frac{|(P_LY)_0|^2}{u_0}
    + \sum_{i=1}^n|(P_LY)_i|^2\bigg)dxds
    + C(\eps)\int_0^t\|\na(\Phi-\bar\Phi)\|_{L^2(\Omega)}^2 ds.
\end{equation}

{\em Step 4: Estimation of $I_2$.}
We split $I_2=I_{21}+I_{22}$, where
\begin{align}
  I_{21} &= -\int_0^t\int_\Omega\sum_{i,j=0}^n u_i\bigg(\frac{A_{ij}}{u_i}-\frac{\bar{A}_{ij}}{\bar{u}_i}\bigg)
  \na\log\bar{u}_j\cdot\na\log\frac{u_i}{\bar{u}_i}dxds, \nonumber \\
  I_{22} &= -\int_0^t\int_\Omega\sum_{i,j=0}^n u_i
  \bigg(\frac{Q_{ij}}{u_i}-\frac{\bar{Q}_{ij}}{\bar{u}_i}\bigg)
  \na\bar\Phi\cdot\na\log\frac{u_i}{\bar{u}_i}dxds. \label{3.defI22}
\end{align}

\begin{lemma}
For any $\eps>0$, there exists $C(\eps)>0$ such that
$$
  I_{21} \le \eps\int_0^t\int_\Omega\frac{|(P_LY)_0|^2}{u_0}dxds
    + C(\eps)\int_0^t\sum_{i=0}^n\|u_i-\bar{u}_i\|_{L^2(\Omega)}^2 ds.
$$
\end{lemma}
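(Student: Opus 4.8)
The plan is to exploit the special structure of the matrix $A$ from \eqref{3.AP}. For the rows $i=1,\dots,n$ the ratio $A_{ij}/u_i$ does not depend on $u$ at all: one has $A_{i0}/u_i=-D_i$, $A_{ii}/u_i=D_i$, and $A_{ij}/u_i=0$ for the remaining indices. Hence $A_{ij}/u_i-\bar A_{ij}/\bar u_i=0$ whenever $i\ge 1$, so only the term $i=0$ survives in $I_{21}$. Using $u_0(A_{0j}/u_0-\bar A_{0j}/\bar u_0)=A_{0j}-(u_0/\bar u_0)\bar A_{0j}$ together with the explicit entries $A_{00}=\sum_{k=1}^nD_ku_k$ and $A_{0j}=-D_ju_j$ ($j\ge 1$), a short computation gives
\begin{equation*}
  \sum_{j=0}^n\Big(A_{0j}-\frac{u_0}{\bar u_0}\bar A_{0j}\Big)\na\log\bar u_j
  = \sum_{j=1}^n D_j\Big(u_j-\frac{u_0}{\bar u_0}\bar u_j\Big)\na\log\frac{\bar u_0}{\bar u_j}=:R,
\end{equation*}
so that $I_{21}=-\int_0^t\int_\Omega R\cdot\na\log(u_0/\bar u_0)\,dx\,ds$.

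Next I would decompose the remaining logarithmic gradient via the splitting $Y_0=(P_LY)_0+(P_{L^\perp}Y)_0$. Since $Y_0=\sqrt{u_0}\,\na\log(u_0/\bar u_0)$ a.e.\ (recall $u_0>0$ a.e.) and, by the identity \eqref{3.PLperp}, $(P_{L^\perp}Y)_0/\sqrt{u_0}=\sum_{k=0}^n(\bar u_k-u_k)\na\log\bar u_k=:S$, we obtain
\begin{equation*}
  \na\log\frac{u_0}{\bar u_0}=\frac{(P_LY)_0}{\sqrt{u_0}}+S.
\end{equation*}
Both $R$ and $S$ are bounded in $L^2(\Omega)$ by $C\sum_{i=0}^n\|u_i-\bar u_i\|_{L^2(\Omega)}$: for $R$ one writes $u_j-(u_0/\bar u_0)\bar u_j=(u_j-\bar u_j)+(\bar u_j/\bar u_0)(\bar u_0-u_0)$, and in both cases the constant absorbs the $L^\infty(\Omega_T)$ norms of $\bar u$, of $1/\bar u_0$, and of $\na\log\bar u_k$, exactly as in the estimate of $I_3$.

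Inserting this decomposition splits $I_{21}=-\int_0^t\int_\Omega R\cdot\tfrac{(P_LY)_0}{\sqrt{u_0}}\,dx\,ds-\int_0^t\int_\Omega R\cdot S\,dx\,ds$. In the first integral the weighted Young inequality $ab\le\eps a^2+C(\eps)b^2$ with $a=|(P_LY)_0|/\sqrt{u_0}$, $b=|R|$, produces exactly $\eps\int_0^t\int_\Omega|(P_LY)_0|^2/u_0\,dx\,ds$ plus $C(\eps)\int_0^t\|R\|_{L^2(\Omega)}^2\,ds\le C(\eps)\int_0^t\sum_{i=0}^n\|u_i-\bar u_i\|_{L^2(\Omega)}^2\,ds$. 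The second integral is a product of the two $L^2(\Omega)$-small vectors $R$ and $S$, hence Cauchy--Schwarz and Young bound it by $C\int_0^t\sum_{i=0}^n\|u_i-\bar u_i\|_{L^2(\Omega)}^2\,ds$. Adding the two contributions yields the asserted estimate.

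The only genuinely delicate point is the bookkeeping of the weight: it is essential that the $L$-component of $\na\log(u_0/\bar u_0)$ carries the factor $1/\sqrt{u_0}$, so that the Young inequality regenerates the degenerate dissipation density $|(P_LY)_0|^2/u_0$ with an arbitrarily small constant $\eps$ --- this is precisely the term that will afterwards be absorbed into the negative contribution $I_{11}$ from Step 3. Integrability of every term rests, as in the lemmas on $I_{11}$ and $I_{13}$, on $\na\log u_0\in L^2(\Omega_T)$ (equivalently $Y_0\in L^2(\Omega_T)$) together with $\bar u_0$ being bounded away from zero.
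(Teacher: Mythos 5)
Your argument is correct and follows essentially the same route as the paper: you isolate the sole surviving row $i=0$ of the difference matrix, identify the same quantity $M=\sum_{j=0}^n(A_{0j}-\tfrac{u_0}{\bar u_0}\bar A_{0j})\na\log\bar u_j$ (your $R$), split $Y_0$ into its $P_L$ and $P_{L^\perp}$ components using \eqref{3.PLperp}, and conclude with Young's and Cauchy--Schwarz inequalities exactly as in the paper. No gaps.
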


\begin{proof}
Recalling that $Y_i=\sqrt{u_i}\na\log(u_i/\bar{u}_i)$,
we reformulate $I_{21}$ as
\begin{align*}
  I_{21} = -\int_0^t\int_\Omega\sum_{i,j=0}^n u_i
  \bigg(\frac{A_{ij}}{u_i}-\frac{\bar{A}_{ij}}{\bar{u}_i}\bigg)
  \frac{Y_i}{\sqrt{u_i}}\cdot\na\log\bar{u}_j dxds.
\end{align*}
All rows of the matrix $(A_{ij}/u_i-\bar{A}_{ij}/\bar{u}_i)$ 
vanish except the first one, 
\begin{align*}
  \frac{A_{00}}{u_0}-\frac{\bar{A}_{00}}{\bar{u}_0}
  = \sum_{i=1}^n D_i\bigg(\frac{u_i}{u_0}-\frac{\bar{u}_i}{\bar{u}_0}
  \bigg), \quad
  \frac{A_{0j}}{u_0}-\frac{\bar{A}_{0j}}{\bar{u}_0}
  = -D_i\bigg(\frac{u_j}{u_0}-\frac{\bar{u}_j}{\bar{u}_0}
    \bigg)\ \mbox{for }j=1,\ldots,n.
\end{align*}
This shows that
\begin{align}\label{3.auxI21}
  I_{21} &= -\int_0^t\int_\Omega\sum_{j=0}^n u_0
  \bigg(\frac{A_{0j}}{u_0} - \frac{\bar{A}_{0j}}{\bar{u}_0}\bigg)
  \frac{Y_0}{\sqrt{u_0}}\cdot\na\log\bar{u}_j dxds \\
  &= -\int_0^t\int_\Omega M\cdot\bigg(\frac{(P_LY)_0}{\sqrt{u_0}}
  + \frac{(P_{L^\perp}Y)_0}{\sqrt{u_0}}\bigg)dxds, \nonumber
\end{align}
where
\begin{align*}
  M &= \sum_{j=0}^n\bigg(A_{0j} - \frac{u_0}{\bar{u}_0}\bar{A}_{0j}\bigg)
  \na\log\bar{u}_j \\
  &= \sum_{i=1}^n D_i\bigg(u_i-\frac{u_0}{\bar{u}_0}\bar{u}_i\bigg)
  \na\log\bar{u}_0 
  - \sum_{i=1}^n D_i\bigg(u_i-\frac{u_0}{\bar{u}_0}\bar{u}_i\bigg)
  \na\log\bar{u}_i \\
  &= \sum_{i=1}^n D_i(u_i-\bar{u}_i)\na\log\bar{u}_0
  + \bigg(1-\frac{u_0}{\bar{u}_0}\bigg)\sum_{i=1}^n D_i\bar{u}_i\na\log\bar{u}_0 \\
  &\phantom{xx}- \sum_{i=1}^n D_i(u_i-\bar{u}_i)\na\log\bar{u}_i
  - \bigg(1-\frac{u_0}{\bar{u}_0}\bigg)\sum_{i=1}^n D_i\bar{u}_i\na\log\bar{u}_i \\
  &= \sum_{i=1}^n D_i(u_i-\bar{u}_i)\na\log\frac{\bar{u}_0}{\bar{u}_i}
  + (u_0-\bar{u}_0)\sum_{i=1}^n D_i\frac{\bar{u}_i}{\bar{u}_0}
  \na\log\frac{\bar{u}_0}{\bar{u}_i}.
\end{align*}
Since $\na\log\bar{u}_i$ is bounded in $L^\infty(\Omega_T)$, we can bound the first term in $I_{21}$:
\begin{align}\label{3.I21a}
  -\int_0^t\int_\Omega & M\cdot\frac{(P_LY)_0}{\sqrt{u_0}}dxds
  \le C\int_0^t\int_\Omega\sum_{i=0}^n|u_i-\bar{u}_i|
  \frac{|(P_LY)_0|}{\sqrt{u_0}}dxds \\
  &\le \eps\int_0^t\int_\Omega\frac{|(P_LY)_0|^2}{u_0}dxds
  + C(\eps)\int_0^t\sum_{i=0}^n\|u_i-\bar{u}_i\|_{L^2(\Omega)}^2 ds,
  \nonumber
\end{align}
where $\eps>0$ is arbitrary.
To estimate the second term in $I_{21}$, we use \eqref{3.PLperp}
and the elementary inequality 
$(\sum_{i=0}^n|u_i-\bar{u}_i|)^2\le(n+1)\sum_{i=0}^n|u_i-\bar{u}_i|^2$:
\begin{align}\label{3.I21b}
  -\int_0^t\int_\Omega M\cdot\frac{(P_{L^\perp}Y)_0}{\sqrt{u_0}}dxds
  &\le C\int_0^t\int_\Omega\sum_{i=0}^n|u_i-\bar{u}_i|
  \sum_{j=0}^n|\bar{u}_j-u_j| dxds \\
  &\le C(n+1)\int_0^t\int_\Omega\sum_{i=0}^n|u_i-\bar{u}_i|^2 dxds.
  \nonumber
\end{align}
The lemma follows after inserting \eqref{3.I21a} and \eqref{3.I21b}
into \eqref{3.auxI21}.
\end{proof}

\begin{lemma}
For any $\eps>0$, there exists $C(\eps)>0$ such that
$$
  I_{22} \le \eps\int_0^t\int_\Omega\frac{|(P_LY)_0|^2}{u_0}dxds
  + C(\eps)\int_0^t\sum_{i=0}^n\|u_i-\bar{u}_i\|_{L^2(\Omega)}^2,
$$
recalling definition \eqref{3.defI22} of $I_{22}$.
\end{lemma}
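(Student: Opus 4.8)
The plan is to exploit the diagonal structure of $Q$. Since $\sum_{j=0}^n Q_{ij}=Q_{ii}=D_iz_iu_i$ for $i=1,\dots,n$, the quotients $Q_{ii}/u_i=D_iz_i$ are constants, hence $Q_{ii}/u_i-\bar{Q}_{ii}/\bar{u}_i=0$ and only the $i=0$ summand of \eqref{3.defI22} survives. Using $\sum_{j=0}^n Q_{0j}=Q_{00}=-\sum_{k=1}^n D_kz_ku_k$ together with the definition $\na\log(u_0/\bar{u}_0)=Y_0/\sqrt{u_0}$, I would first rewrite
\[
  I_{22}=-\int_0^t\int_\Omega u_0\Big(\tfrac{Q_{00}}{u_0}-\tfrac{\bar{Q}_{00}}{\bar{u}_0}\Big)\na\bar\Phi\cdot\na\log\tfrac{u_0}{\bar{u}_0}\,dxds
  =\int_0^t\int_\Omega\sqrt{u_0}\sum_{k=1}^n D_kz_k\Big(\tfrac{u_k}{u_0}-\tfrac{\bar{u}_k}{\bar{u}_0}\Big)\na\bar\Phi\cdot Y_0\,dxds .
\]

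Next, exactly as for $I_{21}$, I would split $Y_0=(P_LY)_0+(P_{L^\perp}Y)_0$ and use the elementary identity
\[
  u_0\Big(\frac{u_k}{u_0}-\frac{\bar{u}_k}{\bar{u}_0}\Big)=(u_k-\bar{u}_k)+\frac{\bar{u}_k}{\bar{u}_0}(\bar{u}_0-u_0),
\]
so that $\sqrt{u_0}\sum_k D_kz_k(u_k/u_0-\bar{u}_k/\bar{u}_0)=u_0^{-1/2}\sum_k D_kz_k[(u_k-\bar{u}_k)+(\bar{u}_k/\bar{u}_0)(\bar{u}_0-u_0)]$. For the $(P_LY)_0$ contribution I would pair the factor $|(P_LY)_0|/\sqrt{u_0}$ with $\sum_k D_kz_k[(u_k-\bar{u}_k)+(\bar{u}_k/\bar{u}_0)(\bar{u}_0-u_0)]\na\bar\Phi$ and apply Young's inequality; this produces $\eps\int_0^t\int_\Omega|(P_LY)_0|^2/u_0\,dxds$ plus a remainder bounded by $C(\eps)\int_0^t\sum_{i=0}^n\|u_i-\bar{u}_i\|_{L^2(\Omega)}^2\,ds$, since $\na\bar\Phi\in L^\infty(\Omega_T)$ and $\bar{u}_k/\bar{u}_0$ is bounded. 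As in the previous lemma, this Young split is admissible because $|(P_LY)_0|/\sqrt{u_0}=|\na\log(u_0/\bar{u}_0)-\sum_{j=0}^n\sqrt{u_j}Y_j|$ is square integrable on $\Omega_T$ thanks to $\na\log u_0\in L^2(\Omega_T)$ and $\sqrt{u_j}Y_j\in L^2(\Omega_T)$.

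For the $(P_{L^\perp}Y)_0$ contribution I would insert identity \eqref{3.PLperp}, namely $(P_{L^\perp}Y)_0=\sqrt{u_0}\sum_{j=0}^n(\bar{u}_j-u_j)\na\log\bar{u}_j$; the two $\sqrt{u_0}$ factors then combine, no negative power of $u_0$ remains, and the integrand becomes $\big(\sum_k D_kz_k[(u_k-\bar{u}_k)+(\bar{u}_k/\bar{u}_0)(\bar{u}_0-u_0)]\big)\na\bar\Phi\cdot\sum_j(\bar{u}_j-u_j)\na\log\bar{u}_j$, which by the boundedness of $\na\bar\Phi$ and $\na\log\bar{u}_j$ and the inequality $(\sum_{i=0}^n|u_i-\bar{u}_i|)^2\le(n+1)\sum_{i=0}^n|u_i-\bar{u}_i|^2$ is pointwise $\le C\sum_{i=0}^n|u_i-\bar{u}_i|^2$; integration over $\Omega_T$ gives the claim. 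The main (and essentially only) delicate point, just as for $I_{21}$, is to organize the Young inequality so that the singular weight $u_0^{-1}$ is absorbed entirely into the dissipation-type term $\eps\int\int|(P_LY)_0|^2/u_0$ while the residual factor $u_0(u_k/u_0-\bar{u}_k/\bar{u}_0)$ is turned, via the displayed identity, into a genuine difference of concentrations; all remaining estimates are routine once the $L^\infty$ bounds of the strong solution are used. Note that, in contrast to $I_{13}$ and $I_{21}$, no $\na(\Phi-\bar\Phi)$ term appears here, since $I_{22}$ only involves the fixed bounded field $\na\bar\Phi$.
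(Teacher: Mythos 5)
Your proof is correct and follows essentially the same route as the paper: isolate the sole surviving entry $Q_{00}/u_0-\bar{Q}_{00}/\bar{u}_0$ of the diagonal matrix, rewrite $u_0(u_k/u_0-\bar u_k/\bar u_0)$ as a genuine difference of concentrations, split $Y_0$ into $(P_LY)_0+(P_{L^\perp}Y)_0$, absorb the $u_0^{-1}$ weight into the $\eps$-term via Young's inequality, and control the $L^\perp$ part through identity \eqref{3.PLperp}. No discrepancies with the paper's argument.
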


\begin{proof}
All entries of the matrix $(Q_{ij}/u_i-\bar{Q}_{ij}/\bar{u}_i)$ 
vanish except the element $Q_{00}/u_0-\bar{Q}_{00}/\bar{u}_0
= -\sum_{i=1}^n D_iz_i(u_i/u_0-\bar{u}_i/\bar{u}_0)$. This leads to
\begin{align*}
  I_{22} &= \int_0^t\int_\Omega\sum_{i=1}^n D_iz_iu_0
  \bigg(\frac{u_i}{u_0}-\frac{\bar{u}_i}{\bar{u}_0}\bigg)
  \na\bar\Phi\cdot\na\log\frac{u_0}{\bar{u}_0}dxds \\
  &= \int_0^t\int_\Omega\sum_{i=1}^n D_iz_i\bigg((u_i-\bar{u}_i)
  + \frac{\bar{u}_i}{\bar{u}_0}(\bar{u}_0-u_0)\bigg)
  \na\bar\Phi\cdot\frac{Y_0}{\sqrt{u_0}}dxds \\
  &= \int_0^t\int_\Omega\sum_{i=1}^n D_iz_i\bigg((u_i-\bar{u}_i)
  + \frac{\bar{u}_i}{\bar{u}_0}(\bar{u}_0-u_0)\bigg)
  \bigg(\frac{(P_LY)_0}{\sqrt{u_0}} 
  + \frac{(P_{L^\perp}Y)_0}{\sqrt{u_0}}\bigg)\cdot\na\bar\Phi dxds \\
  &\le C\int_0^t\int_\Omega\sum_{j=0}^n|u_j-\bar{u}_j|
  \bigg(\frac{|(P_LY)_0|}{\sqrt{u_0}} 
  + \frac{|(P_{L^\perp}Y)_0|}{\sqrt{u_0}}\bigg)|\na\bar\Phi|dsdx.
\end{align*}
It follows from \eqref{3.PLperp} that
$$
  \frac{|(P_{L^\perp}Y)_0|}{\sqrt{u_0}}
  = \bigg|\sum_{j=0}(\bar{u}_j-u_j)\na\log\bar{u}_j\bigg|
  \le C\sum_{j=0}^n|\bar{u}_j-u_j|.
$$
Hence, Young's inequality completes the proof.
\end{proof}

We conclude that
\begin{equation}\label{3.I2}
  I_2 \le 2\eps\int_0^t\int_\Omega\frac{|(P_LY)_0|^2}{u_0}dxds
    + C(\eps)\int_0^t\sum_{i=0}^n\|u_i-\bar{u}_i\|_{l^2(\Omega)}^2 ds.
\end{equation}

{\em Step 5: End of the proof.}
We collect \eqref{3.I3}, \eqref{3.I1}, and \eqref{3.I2}:
\begin{align*}
  I_1+I_2+I_3 &\le (3\eps-D_*)\int_0^t\int_\Omega\bigg(
  \frac{|(P_LY)_0|^2}{u_0} + \sum_{i=1}^n|(P_LY)_i|^2\bigg)dxds \\
  &\phantom{xx}+ C(\eps)\int_0^t\bigg(\sum_{i=0}^n\|u_i-\bar{u}_i\|_{L^2(\Omega)}^2
  + \|\na(\Phi-\bar\Phi)\|_{L^2(\Omega)}^2\bigg)ds.
\end{align*}
Thus, choosing $\eps\le D_*/3$, we conclude from \eqref{3.H} that
\begin{align}\label{3.Haux}
  H((u,\Phi)(t)|(\bar{u},\bar\Phi)(t))
  \le C\int_0^t\bigg(\sum_{i=0}^n\|u_i-\bar{u}_i\|_{L^2(\Omega_T)}^2
  + \|\na(\Phi-\bar\Phi)\|_{L^2(\Omega)}^2\bigg)ds.
\end{align}
It follows from \cite[Lemma 16]{HJT22} that
$$
  \sum_{i=0}^n\int_\Omega u_i\log\frac{u_i}{\bar{u}_i}dx
  \ge \frac12\sum_{i=0}^n\int_\Omega(u_i-\bar{u}_i)^2 dx,
$$
and hence,
$$
  2H(u,\Phi|\bar{u},\bar{\Phi})
  \ge \sum_{i=0}^n\|u_i-\bar{u}_i\|_{L^2(\Omega)}^2
  + \lambda^2\|\na(\Phi-\bar{\Phi})\|_{L^2(\Omega)}^2.
$$
Consequently, we obtain from \eqref{3.Haux}:
$$
  H((u,\Phi)(t)|(\bar{u},\bar\Phi)(t))
  \le C\int_0^t H(u,\Phi|\bar{u},\bar{\Phi})ds,
$$
and Gronwall's lemma finishes the proof.


\section{Remarks on the uniqueness of solutions}\label{sec.rem}

\begin{remark}[Uniqueness of weak solutions]\label{rem.uniq}\rm
The uniqueness of weak solutions for our model is more delicate than for the model of \cite{GeJu18}, even in the case  $D_i=z_i=1$ for $i=1,\ldots,n$. The reason is that we cannot use simple $L^2(\Omega)$ estimations. Instead, we use the $H^{-1}(\Omega)$ method under the (restrictive) condition that $\na\Phi\in L^\infty(\Omega_T)$. 
This regularity holds if the Dirichlet and Neumann boundaries do not intersect and if $\pa\Omega\in C^{1,1}$, $f\in L^p(\Omega)$, and $\Phi^D\in W^{2,p}(\Omega)$ for some $p>3$. Indeed, we conclude from elliptic regularity \cite[Theorem 3.17]{Tro87} that $\Phi\in L^\infty(0,T;W^{2,p}(\Omega))\hookrightarrow L^\infty(0,T;W^{1,\infty}(\Omega))$.
We also assume that $\sum_{i=1}^n r_i(u)=0$.
Summing \eqref{1.mass} over $i=1\ldots,n$, the pair $(u_0,\Phi)$ solves 
\begin{equation}\label{a.1}
  \pa_t u_0 = \diver(\na\log u_0 - (1-u_0)\na\Phi), \quad
  \lambda^2(\ell^2\Delta-1)\Delta\Phi = 1-u_0+f(x)\quad\mbox{in }
  \Omega,
\end{equation}
together with the corresponding initial and boundary conditions \eqref{1.ic}--\eqref{1.bcphi}. We claim that this system has at most one solution. Let $(u_0,\Phi)$ and $(v_0,\Psi)$ be two weak solutions to this problem and let $\chi\in L^2(0,T;H^1(\Omega))$ be the unique solution to $-\Delta\chi=u_0-v_0$ in $\Omega$, $\na\chi\cdot\nu=0$ on $\pa\Omega$. This solution exists since $\int_\Omega(u_0-v_0)dx=0$ because of mass conservation. We use $\chi$ as a test function in the first equation of \eqref{a.1}:
\begin{align*}
  \frac12\frac{d}{dt}&\int_\Omega|\na\chi|^2 dx
  + \int_\Omega(\log u_0-\log v_0)(u_0-v_0)dx \\
  &= \int_\Omega\big(-(u_0-v_0)\na\Phi + (1-v_0)\na(\Phi-\Psi)\big)
  \cdot\na\chi dx.
\end{align*}
Using $(\log u_0-\log v_0)(u_0-v_0)\ge 4(\sqrt{u_0}-\sqrt{v_0})^2$
and $|u_0-v_0|=|\sqrt{u_0}+\sqrt{v_0}||\sqrt{u_0}-\sqrt{v_0}|\le 2|\sqrt{u_0}-\sqrt{v_0}|$, we find that 
\begin{align*}
  \frac12\frac{d}{dt}\int_\Omega|\na\chi|^2 dx
  + 4\int_\Omega(\sqrt{u_0}-\sqrt{v_0})^2 dx
  &\le C\|\sqrt{u_0}-\sqrt{v_0}\|_{L^2(\Omega_T)}
  \|\na\Phi\|_{L^\infty(\Omega_T)}\|\na\chi\|_{L^2(\Omega_T)} \\
  &\phantom{xx}
  + C\|\na(\Phi-\Psi)\|_{L^2(\Omega_T)}\|\na\chi\|_{L^2(\Omega_T)} \\
  &\le 2\|\sqrt{u_0}-\sqrt{v_0}\|_{L^2(\Omega_T)}^2
  + C\|\na\chi\|_{L^2(\Omega_T)}^2,
\end{align*}
where we used the elliptic estimate $\|\na(\Phi-\Psi)\|_{L^2(\Omega_T)}\le C\|u_0-v_0\|_{L^2(\Omega_T)}$ and the assumption $\|\na\Phi\|_{L^\infty(\Omega_T)}\le C$. We conclude from Gronwall's lemma that $\na\chi(t)=0$ and consequently $u_0(t)=v_0(t)$ and $\Phi(t)=\Psi(t)$ for $t>0$. Now, the equation
\begin{equation}\label{a.2}
  \pa_t u_i = \diver(\na u_i - u_i\na(\log u_0-\Phi))
\end{equation}
can be interpreted as a drift-diffusion equation for $u_i$ with given $(u_0,\Phi)$. The regularity $\na\log u_0-\Phi\in L^2(\Omega_T)$ is sufficient for the application of Gajewski's entropy method; see \cite[Sec.~3]{GeJu18}. Thus, there exists at most one solution $u_i$ to \eqref{a.2} with the corresponding initial and boundary conditions.
\qed\end{remark}

\begin{remark}[Weak--strong uniqueness in the presence of reaction terms]\label{rem.reac}\rm 
We claim that Theorem \ref{thm.wsu} holds for reaction rates  $r_i:\overline\dom\to\R$, which are Lipschitz continuous and quasi-positive (i.e.\ $r_i(u)\ge 0$ for all $u\in\dom$ with $u_i=0$) such that the total reaction rate is nonnegative, i.e.\ $\sum_{i=1}^n r_i(u)\le 0$ for all $u\in\dom$, and that $r_i(u)\log u_i=0$ if $u_i=0$. Proceeding as in Step 1 of the proof of Theorem \ref{thm.wsu} and taking into account Remark \ref{rem.feir}, we need to estimate additionally the expression
\begin{align*}
  & R = \int_\Omega\sum_{i=1}^n r_i(u)(w_i-\bar{w}_i)dx =: R_1 + R_2, \quad\mbox{where} \\
  & R_1 = \int_\Omega\sum_{i=1}^n\bigg\{
  r_i(u)\bigg(\log\frac{u_i}{\bar{u}_i}
  - \log\frac{u_0}{\bar{u}_0}\bigg)
  - r_i(\bar{u})\bigg(\frac{u_i}{\bar{u}_i}-\frac{u_0}{\bar{u}_0}\bigg)
  \bigg\}dx, \\
  & R_2 =  \int_\Omega\sum_{i=1}^n
  z_i(r_i(u)-r_i(\bar{u}))(\Phi-\bar\Phi)dx.
\end{align*}
The assumptions on $r_i$ imply that $r_i(u)\log u_i$ is integrable. Therefore, following \cite[p.~202f]{Fis17},
\begin{align*}
  R_1 &= \int_\Omega\sum_{i=1}^n\bigg\{r_i(u)\bigg(
  \log\frac{u_i}{\bar{u}_i} - \frac{u_i}{\bar{u}_i} + 1\bigg)
  - (r_i(u)-r_i(\bar{u}))\bigg(\frac{u_i}{\bar{u}_i}-1\bigg) \\
  &\phantom{xx}
  - r_i(u)\bigg(\log\frac{u_0}{\bar{u}_0} - \frac{u_0}{\bar{u}_0} 
  + 1\bigg)
  + (r_i(\bar{u})-r_i(\bar{u}))\bigg(\frac{u_0}{\bar{u}_0}-1\bigg)
  \bigg\}dx.
\end{align*}
We deduce from $0\ge \log z-z+1\ge -|z-1|^2/\min\{1,z\}$ for $z>0$
that
\begin{align*}
  R_2 &\le \int_\Omega\sum_{i=1}^n\bigg\{C_Ru_i
  \frac{|u_i-\bar{u}_i|^2}{\bar{u}_i\min\{u_i,\bar{u}_i\}}
  + \frac{C}{\bar{u}_i}|r_i(u)-r_i(\bar{u})||u_i-\bar{u}_i| \\
  &\phantom{xx}- r_i(u)\bigg(\log\frac{u_0}{\bar{u}_0} - \frac{u_0}{\bar{u}_0} + 1\bigg)
  + \frac{C}{\bar{u}_0}|r_i(\bar{u})-r_i(\bar{u})||u_0-\bar{u}_0|
  \bigg\}dx \\
  &\le C\int_\Omega\sum_{i=1}^n|u_i-\bar{u}_i|^2 dx
  - \int_\Omega\sum_{i=1}^n r_i(u)\bigg(\log\frac{u_0}{\bar{u}_0} - \frac{u_0}{\bar{u}_0} + 1\bigg)dx \\
  &\le C\int_\Omega\sum_{i=1}^n|u_i-\bar{u}_i|^2 dx.
\end{align*}
where we used in the last step the assumption $\sum_{i=1}^n r_i(u)\le 0$.
Furthermore, by the Lipschitz continuity of $r_i$, the Poincar\'e inequality, and the elliptic estimate for the Poisson--Fermi equation,
$$
  R_2 \le C\sum_{i=1}^n\|u_i-\bar{u}_i\|_{L^2(\Omega)}
  \|\na(\Phi-\bar{\Phi})\|_{L^2(\Omega)}
  \le C\sum_{i=1}^n\|u_i-\bar{u}_i\|_{L^2(\Omega)}^2.
$$
Thus, estimate \eqref{3.Haux} is still valid with another constant,
and Theorem \ref{thm.wsu} follows.
\qed\end{remark}


\end{document}